\newtheorem{theorem}{Theorem}[section]
\newtheorem{definition}[theorem]{Definition}
\newtheorem{proposition}[theorem]{Proposition}
\newtheorem{observation}[theorem]{Observation}
\newtheorem{corollary}[theorem]{Corollary}
\newtheorem{claim}{Claim}
\newtheorem{subclaim}{Claim}[claim]
\newcommand{\gs}{\gamma_{\rm S}}
\newcommand{\Gs}{\Gamma_{\rm S}}
\newcommand{\modo}{{\rm mod \,}}
\newcommand{\2}{ \vspace{0.2cm} }
\newcommand{\1}{ \vspace{0.1cm} }
\newcommand{\smallqed}{{\tiny ($\Box$)}}
\let\oldenumerate\enumerate
\renewcommand{\enumerate}{
  \oldenumerate
  \setlength{\itemsep}{0pt}
  \setlength{\parskip}{0pt}
  \setlength{\parsep}{0pt}
}
\begin{document}

\title{The Sierpi\'{n}ski Domination Number}

\author{$^1$Michael A. Henning, \,
$^{2,3,4}$Sandi Klav\v zar, \\ $^{1,5}$El\.{z}bieta Kleszcz,  \, and  \, $^5$Monika Pil\'{s}niak
\\ \\
\small $^1$Department of Mathematics and Applied Mathematics \\
\small University of Johannesburg \\
\small Auckland Park, 2006 South Africa\\
\small \tt e-mail: mahenning@uj.ac.za
\\ \\
\small $^2$Faculty of Mathematics and Physics \\
\small University of Ljubljana, Slovenia\\
\small \tt e-mail: sandi.klavzar@fmf.uni-lj.si
\\ \\
\small $^3$Institute of Mathematics, Physics and Mechanics \\
\small Ljubljana, Slovenia
\\ \\
\small $^4$ Faculty of Natural Sciences and Mathematics \\
\small University of Maribor, Slovenia 
\\ \\
\small $^5$AGH University\\
\small Department of Discrete Mathematics \\
\small al. Mickiewicza 30, 30-059 Krakow, Poland\\
\small \tt e-mail: {pilsniak, elzbieta.kleszcz}@agh.edu.pl
\\
}

\date{}
\maketitle

\begin{abstract}
Let $G$ and $H$ be graphs and let $f \colon V(G)\rightarrow V(H)$ be a function. The Sierpi\'{n}ski product of $G$ and $H$ with respect to $f$, denoted by  $G \otimes _f H$, is defined as the graph on the vertex set $V(G)\times V(H)$, consisting of $|V(G)|$ copies of $H$; for every edge $gg'$ of $G$ there is an edge between copies $gH$ and $g'H$ of $H$ associated with the vertices $g$ and $g'$ of $G$, respectively, of the form $(g,f(g'))(g',f(g))$. In this paper, we define the Sierpi\'{n}ski domination number as the minimum of $\gamma(G\otimes _f H)$ over all functions $f \colon V(G)\rightarrow V(H)$. The upper Sierpi\'{n}ski domination number is defined analogously as the corresponding maximum. After establishing general upper and lower bounds, we determine the upper Sierpi\'{n}ski domination number of the Sierpi\'{n}ski product of two cycles, and determine the lower Sierpi\'{n}ski domination number of the Sierpi\'{n}ski product of two cycles in half of the cases and in the other half cases restrict it to two values. 
\end{abstract}

{\small \textbf{Keywords:} Sierpi\'{n}ski graph; Sierpi\'{n}ski product; domination number; Sierpi\'{n}ski domination number} \\
\indent {\small \textbf{AMS subject classification:} 05C69, 05C76}

\section{Introduction}

Sierpi\'{n}ski graphs represent a very interesting and widely studied family of graphs. They were introduced in 1997 in the paper~\cite{klavzar-1997}, where the primary motivation for their introduction was the intrinsic link to the Tower of Hanoi problem, for the latter problem see the book~\cite{hinz-2018}. Intensive research of Sierpi\'{n}ski graphs led to a review article~\cite{hinz-2017} in which state of the art up to 2017 is summarized and unified approach to Sierpi\'{n}ski-type graph families is also proposed. Later research on Sierpi\'{n}ski graphs includes~\cite{bresar-2018, deng-2021, farrokhi-2021, liu-2021, varghese-2021}.

Sierpi\'{n}ski graphs have a fractal structure, the basic graphs of which are complete graphs. In 2011, Gravier, Kov\v se, and Parreau~\cite{gravier-2011} introduced a generalization in such a way that any graph can act as a fundamental graph, and called the resulting graphs generalized Sierpi\'{n}ski graphs. We refer to the papers~\cite{balakrishnan-2022, estrada-moreno-2018, estrada-moreno-2019, imrich-2020, khatibi-2020, klavzar-2018, korze-2019, menon-2023, ramezani-2017, rodriguez-2017, vatandoost-2019} for investigations of generalized Sierpi\'{n}ski graphs in the last few years.

An interesting generalization of Sierpi\'{n}ski graphs in the other direction has recently been proposed by Kovi\v{c}, Pisanski, Zemlji\v{c}, and \v{Z}itnik in~\cite{kpzz-2022}. Namely, in the spirit of classical graph products, where the vertex set of a product graph is the Cartesian product of the vertex sets of the factors, they introduced the Sierpi\'{n}ski product of graphs as follows. Let $G$ and $H$ be graphs and let $f \colon V(G)\rightarrow V(H)$ be an arbitrary function. The \textit{Sierpi\'{n}ski product of graphs $G$ and $H$ with respect to $f$}, denoted by  $G \otimes _f H$, is defined as the graph on the vertex set $V(G)\times V(H)$ with edges of two types:
\begin{itemize}
    \item \emph{type-$1$ edge}:
    $(g,h)(g,h')$ is an edge of  $G \otimes _f H$ for every vertex $g\in V(G)$ and every edge $hh' \in E(H)$,
    \item \emph{type-$2$ edge}: $(g,f(g'))(g',f(g))$ is an edge of $G \otimes _f H$ for every edge $gg' \in E(G)$.
\end{itemize}

We observe that the edges of type-$1$ induce $n(G) = |V(G)|$ copies of the graph $H$ in the Sierpi\'{n}ski product $G \otimes _f H$. For each vertex $g \in V(G)$, we let $gH$ be the copy of $H$ corresponding to the vertex $g$. A type-$2$ edge joins vertices from different copies of $H$ in $G \otimes _f H$, and is called a \emph{connecting edges} of $G \otimes _f H$. A vertex incident with a connecting edge is called a \emph{connecting vertex}. We observe that two different copies of $H$ in $G \otimes _f H$ are joined by at most one edge. A copy of the graph $H$ corresponding to a vertex of the graph $G$ in the Sierpi\'{n}ski product $G \otimes _f H$ is called an \textit{$H$-layer}.

Let $G$ and $H$ be graphs and $H^G$ be the family of functions from $V(G)$ to $V(H)$. We introduce new types of domination, the \textit{Sierpi\'{n}ski domination number}, denoted by $\gamma_{S}(G, H)$, as the minimum over all functions $f$ from $H^G$ of the domination number of the Sierpi\'{n}ski product with respect to $f$, and \textit{upper Sierpi\'{n}ski domination number}, denoted by $\Gamma_{S}(G, H)$, as the maximum over all functions $f\in H^G$ of domination number of the Sierpi\'{n}ski product with respect to $f$. That is,
\[
\gs(G, H) \coloneqq \min_{f\in H^G}\{\gamma(G\otimes _f H)\}
\]
and
\[
\Gs(G, H) \coloneqq \max _{f\in H^G}\{\gamma(G\otimes _f H)\}\,.
\]

In this paper, we initiate the study of Sierpi\'{n}ski domination in graphs. In Section~\ref{S:notation} we present the graph theory notation and terminology we follow. In Section~\ref{S:elementary} we discuss general lower and upper bounds on the (upper) Sierpi\'{n}ski domination number. Our main contribution in this introductory paper is to
determine the upper Sierpi\'{n}ski domination number of the Sierpi\'{n}ski product of two cycles, and to determine the lower Sierpi\'{n}ski domination number of the Sierpi\'{n}ski product of two cycles in half of the cases and in the other half cases restrict it to two values. 

\subsection{Notation and terminology}
\label{S:notation}

We generally follow the graph theory notation and terminology in the books~\cite{HaHeHe-20,HaHeHe-21,HeYe-book} on domination in graphs. Specifically, let $G$ be a graph with vertex set $V(G)$ and edge set $E(G)$, and of order $n(G) = |V(G)|$ and size $m(G) = |E(G)|$. For a subset $S$ of vertices of a graph $G$, we denote by $G - S$ the graph obtained from $G$ by deleting the vertices in $S$ and all edges incident with vertices in $S$. If $S = \{v\}$, then we simply write $G - v$ rather than $G - \{v\}$. The subgraph induced by the set $S$ is denoted by $G[S]$. We denote the path, cycle and complete graph on $n$ vertices by $P_n$, $C_n$, and $K_n$, respectively. For $k \ge 1$ an integer, we use the notation $[k] = \{1,\ldots,k\}$ and $[k]_0 = \{0,1,\ldots,k\}$. We generally label vertices of the considered graphs by elements of $[n]$. In this case, the mod function over the set $[n]$ is to be understood in a natural way, more formally, we apply the following operation for $t\geq 1$: $t\mathrm{\; mod^*\;}n = (t-1)\mathrm{\; mod\;} n+1$. 

A vertex \emph{dominates} itself and its neighbors, where two vertices are neighbors in a graph if they are adjacent. A \emph{dominating set} of a graph $G$ is a set $S$ of vertices of $G$ such that every vertex in $G$ is dominated by a vertex in $S$. The \emph{domination number}, $\gamma(G)$, of $G$ is the minimum cardinality of a dominating set of $G$. A dominating set of cardinality $\gamma(G)$ is called a $\gamma$-\emph{set of $G$}. A thorough treatise on dominating sets can be found in \cite{HaHeHe-20,HaHeHe-21}.

If $S$ is a set of vertices in a graph $G$, then we will use the notation $G|S$ to denote that the vertices in the set $S$ are assumed to be dominated and hence $\gamma(G|S)$ is the minimum number of vertices in the graph $G$ needed to dominate $V(G) \setminus S$. We note that it could be that a vertex in $S$ is still a member of a such a minimum dominating set no matter that we do not need to dominate the vertices in $S$ themselves. If $S = \{x\}$, then we simply denote $G|S$ by $G|x$ rather than $G|\{x\}$.

\section{General lower and upper bounds}
\label{S:elementary}

We present in this section general lower and upper bounds on the (upper) Sierpi\'{n}ski domination number.

\begin{theorem}
\label{thm:elementary}
If $G$ and $H$ are graphs, then
\[
n(G)\gamma(H)-m(G)\le \gs(G,H)\le \Gs(G,H) \le n(G)\gamma(H)\,.
\]
\end{theorem}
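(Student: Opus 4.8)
The plan is to prove the three inequalities; the middle one $\gs(G,H)\le\Gs(G,H)$ is immediate from the definitions since a minimum over a nonempty set is at most the maximum. So the work is in the outer two bounds.

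For the upper bound $\Gs(G,H)\le n(G)\gamma(H)$, I would show that for \emph{every} $f\in H^G$ we have $\gamma(G\otimes_f H)\le n(G)\gamma(H)$; taking the maximum over $f$ then finishes it. Fix $f$ and let $D$ be a $\gamma$-set of $H$, so $|D|=\gamma(H)$. In each $H$-layer $gH$ place a copy $D_g=\{g\}\times D$ of $D$, and set $\widehat D=\bigcup_{g\in V(G)}D_g$, so $|\widehat D|=n(G)\gamma(H)$. Since each $gH$ is an isomorphic copy of $H$ and its type-$1$ edges are exactly the edges of $H$, the set $D_g$ dominates all of $V(gH)$ inside the layer; as the layers partition $V(G\otimes_f H)$, the set $\widehat D$ is dominating. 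Hence $\gamma(G\otimes_f H)\le n(G)\gamma(H)$, as required. This direction is routine.

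For the lower bound $n(G)\gamma(H)-m(G)\le\gs(G,H)$, I would again fix an arbitrary $f$ and a $\gamma$-set $S$ of $G\otimes_f H$, and bound $|S|$ from below. The idea is to "repair" $S$ into a set that dominates each layer internally. For each copy $gH$, the vertices of $gH$ that are \emph{not} dominated from within $gH$ by $S\cap V(gH)$ can only be dominated across a connecting edge, and each connecting edge incident with $gH$ can rescue at most one vertex of $gH$; the number of connecting edges incident with $gH$ equals $\deg_G(g)$. So, adding at most $\deg_G(g)$ vertices to $S\cap V(gH)$ yields a dominating set of the copy $gH\cong H$, which therefore has size at least $\gamma(H)$. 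This gives $|S\cap V(gH)|\ge\gamma(H)-\deg_G(g)$ for every $g$. Summing over $g\in V(G)$, using $\sum_{g}\deg_G(g)=2m(G)$ and that the layers partition the vertex set, we would get $|S|\ge n(G)\gamma(H)-2m(G)$, which is off by a factor of $2$ in the error term. To get the claimed $-m(G)$, I would instead orient each edge $gg'$ of $G$ and charge its single connecting edge to just one endpoint (say, assign the connecting edge of $gg'$ to whichever of $gH,g'H$ loses a vertex to it, or more simply observe that each connecting edge is incident to exactly two layers but we only need to count each connecting edge once when summing the deficiencies): the total number of connecting edges is $m(G)$, and the sum over all layers of "vertices of $gH$ rescued from outside" is at most the number of connecting edges, namely $m(G)$. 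Hence $\sum_{g}\bigl(\gamma(H)-|S\cap V(gH)|\bigr)\le m(G)$, i.e. $n(G)\gamma(H)-|S|\le m(G)$, giving $|S|\ge n(G)\gamma(H)-m(G)$. Since $S$ and $f$ were arbitrary, $\gs(G,H)\ge n(G)\gamma(H)-m(G)$.

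The main obstacle is the bookkeeping in the lower bound: being careful that each connecting edge is counted only once across the two layers it touches (so that the aggregate deficiency is bounded by $m(G)$ rather than $2m(G)$), and checking the edge case where a layer $gH$ has $|S\cap V(gH)|$ already at least $\gamma(H)$, in which case its deficiency is zero or negative and contributes nothing harmful to the sum. A clean way to handle this is to define, for each copy $gH$, the integer $d_g=\max\{0,\gamma(H)-|S\cap V(gH)|\}$ and argue $d_g$ is at most the number of connecting edges incident with $gH$ \emph{that actually dominate a vertex of $gH$ not otherwise dominated}, then note these "useful" connecting edges, summed over all copies, total at most $m(G)$ since each connecting edge is useful for at most one of its two endpoints — or, even if useful for both, it is still just one edge, so $\sum_g d_g\le 2m(G)$ trivially and a small refinement (each connecting edge $e=(g,f(g'))(g',f(g))$ can contribute to $d_g$ only if $(g,f(g'))$ is undominated from inside $gH$, and symmetrically for $d_{g'}$) gives the sharper $\sum_g d_g \le m(G)$ once one observes the contribution is really a function of the edge and the orientation can be chosen.
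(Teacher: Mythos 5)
Your proof is correct and rests on the same key observation as the paper's: each of the $m(G)$ connecting edges can account for at most one ``saved'' vertex, because a connecting edge can help a layer $gH$ only when its endpoint in the neighbouring layer lies in the dominating set, in which case that endpoint is already dominated internally and the edge cannot simultaneously help the other layer. The paper packages this as an edge-deletion argument (removing a connecting edge raises the domination number by at most one, and after $m(G)$ removals one needs $n(G)\gamma(H)$ vertices), whereas you count per-layer deficits of a fixed $\gamma$-set directly, but the substance is identical.
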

\begin{proof}
Let $G \otimes _f H$ be an arbitrary Sierpi\'{n}ski product of graphs $G$ and $H$ and let $X$ be a $\gamma$-set of $G \otimes _f H$. Assuming for a moment that all the connecting edges are removed from $G \otimes _f H$, we obtain $n(G)$ disjoint copies of $H$ for which we clearly need $n(G)\gamma(H)$ vertices in a minimum dominating set. Consider now an arbitrary connecting edge $e=(g,f(g'))(g',f(g))$ of $G \otimes _f H$. If no end-vertex of $e$ lies in $X$, then clearly $\gamma(G\otimes _f H- e) = \gamma(G\otimes _f H)$. Similarly, if both  end-vertices of $e$ lie in $X$, then $\gamma(G\otimes _f H- e) = \gamma(G\otimes _f H)$. Hence the only situation in which $e$ has an effect on $\gamma(G\otimes _f H)$ is when $(g,f(g'))\in X$ and $(g',f(g))\notin X$ (or the other way around). But in this case, the effect of the presence of the edge $e$ is that because $(g,f(g'))$ dominates one vertex of $g'H$, the edge $e$ might reduce the domination number by~$1$. That is, each connecting edge can drop the domination number of $G \otimes _f H$ by at most $1$, which proves the left inequality. The other two inequalities are clear.
\end{proof}

To show that the lower bound of Theorem~\ref{thm:elementary} is achieved, we show later in Theorem~\ref{t:thm-3k1dom} that for $n \ge 3$ and $k \ge 1$, if we take $G = C_n$ and $H = C_{3k+1}$ where $n \equiv 0 \, (\modo \, 4)$, then $\gs(G,H) = kn = n(G)\gamma(H) - m(G)$. The upper bound of Theorem~\ref{thm:elementary} is obtained, for example, for the Sierpi\'{n}ski product of two complete graphs. More generally, to achieve equality in the upper bound of Theorem~\ref{thm:elementary} we require the graph $H$ to have the following property.

\begin{theorem}
\label{thm:equ-upper}
The equality in $\Gs(G,H)\le n(G)\gamma(H)$ is achieved if and only if there exists a vertex $x\in V(H)$ such that $\gamma(H|x)=\gamma(H)$.
\end{theorem}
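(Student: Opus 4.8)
The statement is an if-and-only-if, so I would prove the two implications separately, and it is natural to think of it as a statement about how much a single $H$-layer can "save" over the trivial count $\gamma(H)$. The key quantitative observation, already implicit in the proof of Theorem~\ref{thm:elementary}, is that within any fixed copy $gH$ of $H$ in $G\otimes_f H$, the vertices of that copy dominated "from outside" (i.e.\ via connecting edges incident with $gH$) form a set $T_g\subseteq V(gH)$, and the number of vertices a dominating set $X$ must place inside $gH$ is at least $\gamma(gH \mid T_g)$, the $H$-relative domination number of $H$ with the set $T_g$ assumed dominated. Summing over all copies gives $\gamma(G\otimes_f H)\ge \sum_{g\in V(G)}\gamma(H\mid T_g)$. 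Since each connecting vertex of $gH$ is of the form $(g,f(g'))$ for a neighbour $g'$ of $g$ in $G$, only connecting vertices of $gH$ can be dominated from outside, and each connecting vertex equals $(g,f(g'))$ for some fixed choice depending only on $f(g')$; in particular, if the function $f$ is \emph{constant}, say $f\equiv x$, then for every copy $gH$ the only vertex that can be dominated from outside is the single vertex $(g,x)$, so $T_g\subseteq\{x\}$ (where here I abuse notation and identify $V(gH)$ with $V(H)$).

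For the "if" direction, suppose $x\in V(H)$ satisfies $\gamma(H\mid x)=\gamma(H)$. Take $f\equiv x$ the constant function. By the observation above, for this $f$ we have $\gamma(G\otimes_f H)\ge \sum_{g\in V(G)}\gamma(H\mid T_g)$ with each $T_g\subseteq\{x\}$, hence $\gamma(H\mid T_g)\ge \gamma(H\mid x)=\gamma(H)$, giving $\gamma(G\otimes_f H)\ge n(G)\gamma(H)$. Combined with the upper bound of Theorem~\ref{thm:elementary} this yields $\gamma(G\otimes_f H)=n(G)\gamma(H)$, so $\Gs(G,H)=n(G)\gamma(H)$.

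For the "only if" direction, argue contrapositively: assume that for every $x\in V(H)$ we have $\gamma(H\mid x)\le \gamma(H)-1$ (note $\gamma(H\mid x)\le\gamma(H)$ always, so the negation is strict inequality for all $x$). I must show that for \emph{every} $f\in H^G$ we get $\gamma(G\otimes_f H)\le n(G)\gamma(H)-1$, i.e.\ the maximum is strictly below the trivial bound. Fix any $f$. Pick an edge $gg'$ of $G$ (assuming $G$ has at least one edge; if $G$ is edgeless the Sierpi\'nski product is just $n(G)$ disjoint copies of $H$ and $\Gs(G,H)=n(G)\gamma(H)$, but then trivially $\gamma(H\mid v)=\gamma(H)$ for any $v$ since there is nothing dominating from outside — so the edgeless case needs to be handled as a degenerate exception, or excluded, in the statement). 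With $gg'\in E(G)$ there is a connecting edge $(g,f(g'))(g',f(g))$. Let $x=f(g')$. Build a dominating set of $G\otimes_f H$ as follows: in the copy $gH$ take a set $D_g$ of size $\gamma(H\mid x)\le\gamma(H)-1$ dominating all of $gH$ except possibly the vertex $(g,x)=(g,f(g'))$ — but that vertex is dominated by $(g',f(g))$ through the connecting edge provided $(g',f(g))\in D_{g'}$; in every other copy $g''H$ (including $g'H$) take a $\gamma$-set of $H$, chosen in $g'H$ so that it contains the image of $f(g)$. The total size is $(\gamma(H)-1)+(n(G)-1)\gamma(H)\le n(G)\gamma(H)-1$. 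I still need to check this is genuinely dominating: every copy other than $gH$ has a full $\gamma$-set inside, hence is internally dominated; the copy $gH$ has all vertices but $(g,x)$ dominated by $D_g$, and $(g,x)$ is dominated along the connecting edge. The only subtlety is ensuring the chosen $\gamma$-set of $g'H$ can be taken to contain the vertex $(g',f(g))$ — but any graph $H$ has a $\gamma$-set containing a prescribed vertex $v$ if and only if $v$ lies in some minimum dominating set; if not, we can instead take a $\gamma$-set of $H$ together with... no — here we simply add $(g',f(g))$ to an arbitrary $\gamma$-set of $g'H$ only when needed, costing one extra vertex, which would break the count. The cleaner fix: we do not need $(g',f(g))\in D_{g'}$; we only need $(g,x)$ to be dominated, and it is dominated by $(g',f(g))$ iff $(g',f(g))$ is in the dominating set. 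So instead take in $g'H$ a minimum set dominating $V(g'H)$ that \emph{also} contains $(g',f(g))$; this has size $\gamma(H\mid \varnothing$, containing $f(g))$ which is $\gamma(H)$ or $\gamma(H)+1$. To avoid the $+1$, observe we have slack: $D_g$ has size $\le\gamma(H)-1$, so even if $D_{g'}$ has size $\gamma(H)+1$ the two copies together use $\le 2\gamma(H)$, and the grand total is still $\le n(G)\gamma(H)$ — not $<$.

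So the genuinely delicate point — and the step I expect to be the main obstacle — is squeezing out that one extra vertex: I must choose the edge $gg'$ and exploit $\gamma(H\mid x)\le\gamma(H)-1$ in a way that pays for making $(g',f(g))$ a connecting vertex in the dominating set. The resolution I would pursue: take $D_{g'}$ to be a set of size $\gamma(H)$ dominating $V(g'H)$; if it does not contain $(g',f(g))$, note that $(g',f(g))$ must be dominated, say by $w\in D_{g'}$; whether or not $(g,x)$ is then dominated from outside, the point is we also have freedom in $D_g$. Concretely, set $D_{g'}$ = any $\gamma$-set of $g'H$ and separately guarantee that $(g',f(g))\in D_{g'}$ by instead letting $D_{g'}$ be a minimum \emph{total-from-outside-irrelevant} set — this circularity suggests the right formulation is: among all copies, arrange the single "savings" at the copy $gH$ and simply require $(g',f(g))$ to be dominated \emph{within} $g'H$, which it is, by the $\gamma$-set; to also have it \emph{in} the dominating set we use that if every $\gamma$-set of $H$ avoids a given vertex $v$ then $\gamma(H\mid v')\le\gamma(H)-1$ for a suitable neighbour-related $v'$... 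I would ultimately replace the naive construction by the cleaner statement: since $\gamma(H\mid x)<\gamma(H)$ for the relevant $x=f(g')$, there is a set $D$ in $H$ of size $\gamma(H)-1$ dominating $V(H)\setminus\{x\}$; place this (minus one, i.e.\ size $\gamma(H)-1$) in $gH$, in $g'H$ place any $\gamma$-set $D'$ and then \emph{one} more vertex $(g',f(g))$ so $|D'|+1=\gamma(H)+1$; total over the two copies $=2\gamma(H)$, and over all copies $n(G)\gamma(H)$ — still tight. The only escape is to find \emph{two} savings, or one saving of size $\ge 2$: i.e.\ exploit that $\gamma(H\mid x)\le\gamma(H)-1$ gives a saving of $1$, and separately that we need only \emph{one} added connecting vertex, so these cancel and we land exactly at $n(G)\gamma(H)$ — meaning the "only if" as literally stated must require something slightly stronger, OR (more likely) the intended argument is that when $\gamma(H\mid x)<\gamma(H)$ for all $x$, one can pick the connecting vertex of $g'H$ to \emph{already} be forced into a minimum dominating set for free because $\gamma(H\mid f(g))<\gamma(H)$ too, i.e.\ use a set of size $\gamma(H)-1$ in $g'H$ dominating all but $(g',f(g))$ and dominate $(g',f(g))$ back from $(g,x)\in D_g$ — a symmetric two-copy gadget saving $2$ total, so the grand total is $\le n(G)\gamma(H)-1$, as required; I would make this the core of the argument, carefully checking the mutual-domination of the two connecting vertices $(g,f(g'))$ and $(g',f(g))$ along the single connecting edge closes up consistently.
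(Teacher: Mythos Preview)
Your ``if'' direction is correct and essentially identical to the paper's: take $f\equiv x$ constant, observe that in each copy $gH$ only $(g,x)$ can be dominated from outside, and use $\gamma(H\mid x)=\gamma(H)$ to force $\gamma(H)$ vertices per copy.

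Your ``only if'' direction, however, has a genuine gap. You correctly identify the construction to aim for (save one vertex across two adjacent copies $gH,g'H$), and you correctly isolate the obstacle: you need the connecting vertex $(g',f(g))$ to lie in a $\gamma$-set of $g'H$, and in general a prescribed vertex need not lie in any minimum dominating set. But you never resolve this, and your final ``symmetric two-copy gadget'' does not work as written. If $D_g$ has size $\gamma(H)-1$ and dominates $V(gH)\setminus\{(g,f(g'))\}$, and $D_{g'}$ has size $\gamma(H)-1$ and dominates $V(g'H)\setminus\{(g',f(g))\}$, then the two missing vertices are the \emph{endpoints} of the unique connecting edge. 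Neither is guaranteed to be in $D_g\cup D_{g'}$, so neither is guaranteed to be dominated; the mutual-domination does \emph{not} ``close up consistently'', and your claimed saving of $2$ is unjustified.

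The missing observation is short: the very hypothesis you are assuming, $\gamma(H\mid v)<\gamma(H)$ for every $v\in V(H)$, already implies that \emph{every} vertex of $H$ lies in some $\gamma$-set of $H$. Indeed, take a witness set $S$ of size $\gamma(H)-1$ dominating $V(H)\setminus\{v\}$; then $S\cup\{v\}$ is a dominating set of size $\le\gamma(H)$ containing $v$, hence a $\gamma$-set containing $v$. With this in hand your earlier asymmetric construction goes through cleanly: in $g'H$ take a $\gamma$-set containing $(g',f(g))$ (size $\gamma(H)$); in $gH$ take a set of size $\gamma(H)-1$ dominating all of $gH$ except possibly $(g,f(g'))$, which is dominated via the connecting edge; in every other copy take a $\gamma$-set. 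Total $n(G)\gamma(H)-1$. This is exactly the paper's (tersely stated) argument that the two adjacent copies can be handled with at most $2\gamma(H)-1$ vertices.

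Finally, your aside about edgeless $G$ is a valid observation: if $E(G)=\emptyset$ then $\Gs(G,H)=n(G)\gamma(H)$ regardless of $H$, so the ``only if'' direction tacitly assumes $G$ has at least one edge. The paper does not make this explicit.
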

\begin{proof}
Suppose that $H$ has a vertex $x$ that satisfies $\gamma(H|x)=\gamma(H)$. In this case, we consider the Sierpi\'{n}ski product $G \otimes_f H$ with the function $f \colon V(G)\rightarrow V(H)$ defined by $f(v) = x$ for every vertex $v \in V(G)$. Consequently each connecting edge in the product is of the form $(g,x)(g',x)$. Thus, if $X$ is a $\gamma$-set of $G \otimes_f H$, then $|X \cap V(gH)| = \gamma(H)$ because the only vertex of $gH$ that can be dominated from outside $gH$ is $(g,x)$, but we have assume that $\gamma(H|x) = \gamma(H)$. Therefore, $\Gamma_S(G,H) = n(G)\gamma(H)$.

For the other implication suppose that $\gamma(H|x)<\gamma(H)$ for every vertex $x \in V(H)$. For an arbitrary edge $g_1g_2 \in E(G)$, if $D$ corresponds to a $\gamma$-set of the product $G \otimes_f H$, then $|D \cap V(G \otimes_f H[V(g_1H)\cup V(g_2H)])| \le 2\gamma(H)-1$. Consequently, $\Gamma_S(G,H) < n(G)\gamma(H)$.
\end{proof}

To conclude this section we describe large classes of graphs for which the second and the third inequality of Theorem~\ref{thm:elementary} are both equality. 

\begin{proposition}
If $G$ and $H$ are graphs such that $\Delta(G) < n(H)$ and $\gamma(H)=1$, then $\Gs(G,H) = \gs(G,H) = n(G)$.
\end{proposition}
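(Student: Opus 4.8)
The plan is to trap both parameters between $n(G)$ and $n(G)$. For the upper bound there is nothing to do beyond quoting Theorem~\ref{thm:elementary}: since $\gamma(H)=1$, that theorem gives $\Gs(G,H)\le n(G)\gamma(H)=n(G)$, and consequently also $\gs(G,H)\le \Gs(G,H)\le n(G)$. So the whole content of the proposition lies in the matching lower bound $\gs(G,H)\ge n(G)$, and for this it suffices to show that $\gamma(G\otimes_f H)\ge n(G)$ for \emph{every} function $f\in H^G$ (the hypothesis $\gamma(H)=1$ is not needed here; it is only used to make the upper bound collapse to $n(G)$).

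To prove this, I would fix an arbitrary $f$ and an arbitrary dominating set $D$ of $G\otimes_f H$, and argue $H$-layer by $H$-layer. Consider a layer $gH$ with $g\in V(G)$. The connecting vertices contained in $gH$ are precisely the vertices of the form $(g,f(g'))$ where $g'$ ranges over the neighbors of $g$ in $G$; hence there are at most $\deg_G(g)\le \Delta(G)<n(H)$ of them. Since $|V(gH)|=n(H)$, there is therefore a vertex $(g,z)\in V(gH)$ that is incident with no connecting edge, so that \emph{all} neighbors of $(g,z)$ in $G\otimes_f H$ lie inside $gH$. To dominate $(g,z)$, the set $D$ must contain $(g,z)$ itself or one of these neighbors, so $D\cap V(gH)\neq\emptyset$. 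As the $n(G)$ $H$-layers partition $V(G\otimes_f H)$, this yields $|D|\ge n(G)$, and hence $\gamma(G\otimes_f H)\ge n(G)$.

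Putting the two bounds together gives $n(G)\le\gs(G,H)\le\Gs(G,H)\le n(G)$, forcing equality throughout, as claimed. I do not expect a real obstacle: the argument is elementary. The only point that needs a moment's care is the counting of connecting vertices in a single layer --- distinct neighbors $g'$ of $g$ may be sent to the same value $f(g')$, so one only gets the estimate $\deg_G(g)$ on the number of such vertices --- but the strict inequality $\Delta(G)<n(H)$ is exactly what is required to guarantee an ``interior'' vertex in every layer. It is also worth checking the degenerate case $n(H)=1$, where the hypothesis forces $\Delta(G)=0$, $G\otimes_f H$ is a set of $n(G)$ isolated vertices, and the statement is immediate.
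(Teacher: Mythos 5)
Your proof is correct and follows essentially the same route as the paper: the upper bound from Theorem~\ref{thm:elementary} together with the observation that at most $\Delta(G)<n(H)$ vertices of any $H$-layer are connecting vertices, so every layer must meet the dominating set. Your phrasing (exhibiting an explicit non-connecting vertex $(g,z)$ in each layer) is a slightly more direct rendering of the paper's argument, and your side remark that the lower bound needs only $\Delta(G)<n(H)$ and not $\gamma(H)=1$ is accurate.
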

\begin{proof}
Let $G$ and $H$ be graphs such that $\Delta(G) < n(H)$ and $\gamma(H)=1$. Thus, by Theorem \ref{thm:elementary}, $\gs(G,H) \leq \Gs(G,H)\leq n(G)$ and it is straightforward that the Sierpi\'{n}ski product of graphs $G$ and $H$ can be dominated by taking one dominating vertex from each $H$-layer to the dominating set. 
It remains to show that the inequality $\gs(G,H)\geq n(G)$ also holds. Suppose that $\gs(G,H)\leq n(G)-1$. Let $D$ be a dominating set of $G \otimes _f H$, where $f$ is such that that minimizes the domination number. Therefore there is an $H$-layer, denote it by $H'$, of $G \otimes _f H$ such that $D\cap V(H')=\emptyset$. Since there are at most $\Delta(G)$ connecting edges incident with vertices from each $H$-layer and $\Delta(G) < n(H)$,then all the vertices from an $H$-layer cannot be dominated by the vertices from the neighboring layers. Therefore we have $D\cap V(H'')\neq \emptyset$ for each $H''$-layer of $G \otimes _f H$. Thus $\gs(G,H) \geq n(G)$ and the result follows.
\end{proof}

\section{The Sierpi\'{n}ski domination number of cycles}

Let us recall firstly the domination number of a path and a cycle.

\begin{observation}
For $n\ge 3$, $\gamma(P_n)=\gamma(C_n)=\big\lceil \frac{n}{3}\big\rceil$.
\end{observation}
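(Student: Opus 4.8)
The plan is to prove the two equalities by sandwiching $\gamma(P_n)$ and $\gamma(C_n)$ between matching lower and upper bounds. For the lower bound I would argue by a volume count: in both $P_n$ and $C_n$ every vertex has degree at most~$2$, so for any vertex $v$ the closed neighbourhood satisfies $|N[v]|\le 3$. Hence if $D$ is a dominating set of $G\in\{P_n,C_n\}$, then
\[
n = |V(G)| = \left|\bigcup_{v\in D} N[v]\right| \le \sum_{v\in D}|N[v]| \le 3|D|\,,
\]
so $|D|\ge n/3$, and since $|D|$ is an integer this forces $|D|\ge\lceil n/3\rceil$. Thus $\gamma(P_n)\ge\lceil n/3\rceil$ and $\gamma(C_n)\ge\lceil n/3\rceil$.

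For the matching upper bound I would exhibit explicit dominating sets of size $\lceil n/3\rceil$. Labelling the vertices of $C_n$ by $1,\dots,n$ in cyclic order, the set $\{1,4,7,\dots\}$ consisting of every third vertex (indices taken modulo $n$) has exactly $\lceil n/3\rceil$ elements, and each chosen vertex $3j+1$ dominates the block $\{3j,3j+1,3j+2\}$; these blocks cover $V(C_n)$, the final block wrapping around harmlessly when $n\not\equiv 0\pmod 3$. For $P_n$ one cannot use the wrap-around edge, so I would instead take $\{2,5,8,\dots\}$ truncated to $[n]$, which perfectly tiles when $n\equiv 0\pmod 3$, and in the two remaining residue classes add one extra vertex near the end of the path (namely $n$ or $n-1$) to cover the last one or two undominated vertices; in each case the resulting set still has $\lceil n/3\rceil$ vertices. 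A short check of the three residues of $n$ modulo $3$ confirms both constructions, giving $\gamma(P_n),\gamma(C_n)\le\lceil n/3\rceil$.

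Combining the bounds yields $\gamma(P_n)=\gamma(C_n)=\lceil n/3\rceil$. A minor shortcut worth recording is that, since $P_n$ is obtained from $C_n$ by deleting a single edge, every dominating set of $P_n$ is also a dominating set of $C_n$, whence $\gamma(C_n)\le\gamma(P_n)$; with this observation only the path construction is needed on the upper side. I do not expect any genuine obstacle here: the degree bound is immediate, and the only mildly fiddly point is the boundary bookkeeping in the explicit constructions across the residue classes of $n$ modulo~$3$, which is entirely routine. This is, after all, the classical value of the domination number of paths and cycles.
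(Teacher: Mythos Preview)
Your argument is correct. The paper itself does not prove this statement at all: it is recorded as an ``Observation'' and treated as the classical, well-known value of the domination number of paths and cycles, cited as background for the subsequent results. So there is no paper proof to compare against; you have supplied a standard self-contained argument where the authors chose to invoke common knowledge.

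One small inaccuracy in your write-up: for $P_n$ with $n\equiv 2\pmod 3$, the set $\{2,5,\dots\}\cap[n]$ already contains $n$ itself (since $n=3k+2$), has exactly $k+1=\lceil n/3\rceil$ elements, and dominates the whole path without any extra vertex being added. The extra vertex is needed only in the case $n\equiv 1\pmod 3$. This does not affect the correctness of the conclusion, and your closing remark that a short residue-by-residue check settles it is accurate.
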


In this section, we shall prove the following results.

\begin{theorem}
\label{t:thm-main-1}
For $n \ge 3$, $k \ge 1$, and $p \in [2]_0$,
\[
\gs(C_n,C_{3k+p}) \in
\left\{
\begin{array}{ll}
\{kn\};  & \mbox{$p = 0$,} \2 \\
\{kn, kn+1\};  & \mbox{$p = 1$,} \2 \\
\{kn + \left\lfloor \dfrac{n}{2} \right\rfloor, kn + \left\lfloor \dfrac{n}{2} \right\rfloor+1\};  & \mbox{$p = 2$.}
\end{array}
\right.
\]
Moreover, if $n \equiv 0 \mod 4$, then $\gs(C_n,C_{3k+1})  = kn$ and $\gs(C_n,C_{3k+2})  = kn + \left\lfloor \dfrac{n}{2} \right\rfloor$.
\end{theorem}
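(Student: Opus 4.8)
The plan is to establish the statement separately for the three residues $p\in\{0,1,2\}$, and within each case to prove matching lower and upper bounds (up to the indicated gap of one). For the \emph{upper bounds} I would construct an explicit function $f\colon V(C_n)\to V(C_{3k+p})$ together with an explicit dominating set of $C_n\otimes_f C_{3k+p}$ of the required size. The natural choice is to make all connecting edges ``line up'': label $V(C_n)=[n]$ and $V(C_{3k+p})=[3k+p]$, and pick $f$ so that the connecting edge between consecutive layers $iH$ and $(i{+}1)H$ joins a fixed pair of vertices whose positions within the $3k+p$-cycle are easy to dominate. For $p=0$, each layer is $C_{3k}$ and a perfect efficient dominating set of size $k$ exists; taking the union over all $n$ layers of a size-$k$ dominating set of $C_{3k}$ (shifted so that a connecting vertex is already dominated, when possible) gives $kn$, matching the lower bound $n(G)\gamma(H)-m(G)=n\cdot k - n$... wait—that is $nk-n$, not $nk$; so for $p=0$ the content is to show that the drop of $m(G)=n$ guaranteed by Theorem~\ref{thm:elementary} is \emph{not} attainable in general but that $kn$ always is, except that the ``Moreover'' clause says when $n\equiv 0\pmod 4$ we do reach $kn$ as the minimum. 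I will need the parity of $n$ to control how many connecting edges can simultaneously be ``used'' to save a vertex; an alternating pattern of layers (every other layer contributes $k-1$, the rest contribute $k$) is what forces the $n\equiv 0\pmod 4$ hypothesis in the sharp cases.

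For the \emph{lower bounds}, the key tool is a layer-by-layer accounting argument. For any $f$ and any dominating set $D$, write $d_i=|D\cap V(iH)|$ for the $i$-th $H$-layer. Since each layer $iH\cong C_{3k+p}$ has at most two connecting vertices (one to $i-1$, one to $i+1$), and a vertex outside $iH$ can dominate at most one vertex of $iH$, one gets the local bound $d_i \ge \gamma(C_{3k+p}\,|\,\{c_i,c_i'\})$ where $c_i,c_i'$ are the (at most two) connecting vertices of $iH$; and this is at least $\gamma(C_{3k+p})-1 = k+\lceil p/3\rceil - [p\ne 0]$ roughly. Summing gives $\sum_i d_i \ge n(\gamma(H)-1) = n k$ for $p\in\{0,1\}$ (since $\gamma(C_{3k})=\gamma(C_{3k+1})=k+1$... again care with $\gamma(C_{3k+1})=k+1$), and $\ge n(k+1-1)=nk$ is too weak for $p=2$ where we need $nk+\lfloor n/2\rfloor$. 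So for $p=2$ the refinement is essential: $\gamma(C_{3k+2})=k+1$ and $\gamma(C_{3k+2}\,|\,x)=k+1$ for \emph{every} $x$ (one vertex of credit never suffices to lower the domination number of $C_{3k+2}$ below $k+1$), whereas $\gamma(C_{3k+2}\,|\,\{x,y\})$ can be $k$ only when $x,y$ are ``far apart'' in a precise sense. Thus at most half the layers (those with two connecting vertices in a favourable configuration) can achieve $d_i=k$, giving $\sum d_i\ge \lceil n/2\rceil k + \lfloor n/2\rfloor(k+1)=nk+\lfloor n/2\rfloor$. I would isolate these facts about $\gamma(C_m\,|\,S)$ for $|S|\le 2$ as preliminary lemmas, since the whole theorem rests on them.

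The \textbf{main obstacle} I expect is pinning down the $p=1$ and $p=2$ cases to the claimed two-value sets and, for the ``Moreover'' clause, closing the gap exactly when $4\mid n$. The difficulty is that a connecting vertex $(i,f(i{+}1))$ that is put into $D$ is ``spent'' on layer $iH$ but also gives a free domination to layer $(i{+}1)H$; whether layer $(i{+}1)H$ can actually exploit this depends on whether $f(i)$ sits in the right place relative to the size-$k$ almost-dominating sets of $C_{3k+p}$. Tracking this forces a case analysis on the cyclic sequence of ``types'' of consecutive layers, and the global constraint that the layer indices form a cycle $C_n$ (not a path) is what produces the parity condition: an alternating good/bad pattern closes up consistently only when $n$ is even, and the stronger $4\mid n$ is needed to also align the internal shifts of the dominating sets. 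I would handle this by defining, for each ordered pair of adjacent layers, a small ``transfer'' quantity and showing it obeys a discrete conservation law around the cycle $C_n$; feasibility of the extremal configuration (all layers simultaneously at the minimum) is then equivalent to a congruence on $n$, which is exactly $n\equiv 0\pmod 4$ in the $p\in\{1,2\}$ cases and vacuous for $p=0$. Routine but lengthy checking that the constructed sets are genuinely dominating, and that no smaller set exists, would complete each case.
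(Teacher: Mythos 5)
Your overall architecture --- per-layer accounting for the lower bounds via facts about $\gamma(C_m\,|\,S)$ with $|S|\le 2$, plus explicit periodic choices of $f$ for the upper bounds, with the mod-$4$ condition arising from closing the pattern around the cycle --- is the same as the paper's. But there is a concrete error that breaks your $p=0$ case: $\gamma(C_{3k})=k$, not $k+1$, so your generic per-layer bound $d_i\ge\gamma(H)-1$ yields only $n(k-1)$, which is useless. The correct mechanism for $p=0$ is of a different character: one must show that $\gamma(C_{3k}-\{x,y\})=k$ for \emph{every} choice of the (at most two) connecting vertices $x,y$, i.e.\ pre-dominating two vertices of $C_{3k}$ never lowers the domination number below $k$; this is checked by examining the residues mod~$3$ of the lengths of the two paths into which $x$ and $y$ split the cycle. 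Once that is in place, every layer contributes exactly $k$ for every $f$, so $\gs(C_n,C_{3k})=\Gs(C_n,C_{3k})=kn$ with no construction and no parity condition at all --- your suggestion that an alternating pattern and the mod-$4$ hypothesis play a role for $p=0$ is a red herring (the ``Moreover'' clause concerns only $p=1,2$, as you eventually note).

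For $p=1$ and $p=2$ your lower-bound outline is essentially right (for $p=2$ the key facts are exactly that $\gamma(C_{3k+2}\,|\,x)=k+1$ for every single $x$, that $d_i=k$ forces both connecting vertices of the $i$th layer to be dominated from outside so that such layers form an independent set in $C_n$ --- note your count is backwards: at most $\lfloor n/2\rfloor$ layers, not $\lceil n/2\rceil$, can have $d_i=k$, though the resulting bound still reads $kn+\lfloor n/2\rfloor$). The real gap is that the upper bounds are never established: no function $f$ is exhibited. The paper uses explicit period-$4$ patterns (for $p=1$, $f$ takes values $1,1,3,3,1,1,3,3,\dots$; for $p=2$, values $1,2,3,3,\dots$), so that consecutive connecting vertices $x_i,y_i$ within a layer sit at prescribed distances ($2$ throughout for $p=1$; alternately $1$ and $2$ for $p=2$), and then verifies that the leftover $n\bmod 4$ layers cost at most one extra vertex. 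Your ``transfer quantity obeying a conservation law'' is a reasonable abstraction of why the pattern closes up exactly when $4\mid n$, but it is precisely the part that must be carried out; as written, the $p=0$ lower bound does not go through and the $p=1,2$ upper bounds are only promised, not proved.
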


\begin{theorem}
\label{t:thm-main-2}
For $n \ge 3$, $k \ge 1$, and $p \in [2]_0$,
\[
\Gs(C_n,C_{3k+p}) =
\left\{
\begin{array}{ll}
kn;  & \mbox{$p = 0$,} \2 \\
kn+ \left\lceil \dfrac{n}{3} \right\rceil\,;  & \mbox{$p = 1$,} \2 \\
(k+1)n;  & \mbox{$p = 2$.}
\end{array}
\right.
\]
\end{theorem}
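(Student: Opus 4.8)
The plan is to treat the three residue classes $p\in\{0,1,2\}$ separately. For $p=0$ and $p=2$ the result follows immediately from Theorem~\ref{thm:equ-upper}: it suffices to exhibit a vertex $x$ of the relevant cycle $H$ with $\gamma(H|x)=\gamma(H)$. When $H=C_{3k}$ we have $\gamma(H)=k$, while $V(H)\setminus\{x\}$ has $3k-1$ vertices, so $\gamma(H|x)\ge\lceil(3k-1)/3\rceil=k$; together with $\gamma(H|x)\le\gamma(H)$ this gives $\gamma(H|x)=k=\gamma(H)$, hence $\Gs(C_n,C_{3k})=n\gamma(C_{3k})=kn$. When $H=C_{3k+2}$ we have $\gamma(H)=k+1$ and $\gamma(H|x)\ge\lceil(3k+1)/3\rceil=k+1$, so again $\gamma(H|x)=\gamma(H)$ and $\Gs(C_n,C_{3k+2})=(k+1)n$.

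The case $p=1$ is the real content. Here $\gamma(C_{3k+1})=k+1$ but $\gamma(C_{3k+1}|x)=k<\gamma(C_{3k+1})$ (place $k$ vertices efficiently so that they dominate all of $C_{3k+1}$ except $x$), so Theorem~\ref{thm:equ-upper} only yields $\Gs(C_n,C_{3k+1})<(k+1)n$, and we must pin down the exact value $kn+\lceil n/3\rceil$. For the lower bound I would use the constant function $f\equiv x$. Then every connecting edge is of the form $(i,x)(i+1,x)$, the $n$ connecting vertices $(1,x),\dots,(n,x)$ induce a copy of $C_n$, and $(i,x)$ is the only vertex of the layer $iH$ having a neighbour outside $iH$. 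Fix a $\gamma$-set $D$ of the product and set $d_i=|D\cap V(iH)|$. Since all $3k$ vertices of $iH$ other than $(i,x)$ must be dominated within $iH$, we get $d_i\ge k$ for every $i$. Moreover, if $d_i=k$ then the $k$ in-layer vertices dominate exactly $V(iH)\setminus\{(i,x)\}$ with no overlaps, so $(i,x)$ is dominated only across a connecting edge, forcing $(i-1,x)\in D$ or $(i+1,x)\in D$; and whenever $(i',x)\in D$ with $i'\in\{i-1,i+1\}$, the layer $i'H$ has its unique external vertex $(i',x)$ already in $D$, hence $D\cap V(i'H)$ is a dominating set of the cycle $i'H$ and $d_{i'}\ge k+1$. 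Therefore $B=\{i:d_i\ge k+1\}$ meets $\{i-1,i+1\}$ for every $i\notin B$, i.e.\ $B$ is a dominating set of $C_n$, so $|B|\ge\lceil n/3\rceil$ and $|D|=\sum_i d_i\ge kn+|B|\ge kn+\lceil n/3\rceil$.

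For the matching upper bound I would show $\gamma(C_n\otimes_f C_{3k+1})\le kn+\lceil n/3\rceil$ for an arbitrary $f$ by exhibiting a dominating set of that size (this also shows the constant $f$ is extremal). Fix a minimum dominating set $S$ of $C_n$, so $|S|=\lceil n/3\rceil$, and for each $i\notin S$ choose a neighbour $\sigma(i)\in S$. In each layer $iH$ with $i\notin S$, take $k$ vertices forming a dominating set of the path $C_{3k+1}-(i,f(\sigma(i)))$; this dominates all of $iH$ except the connecting vertex $(i,f(\sigma(i)))$. In each layer $jH$ with $j\in S$, take $k+1$ vertices forming a dominating set of $C_{3k+1}$ that in addition contains $(j,f(i))$ for every $i\notin S$ with $\sigma(i)=j$ (there are at most two such $i$, since $j$ has only two neighbours in $C_n$); then $jH$ is fully dominated, and for each such $i$ the connecting edge $(i,f(j))(j,f(i))$ dominates the one leftover vertex $(i,f(\sigma(i)))$ of $iH$. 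The union of all these sets has size $(k+1)\lceil n/3\rceil+k(n-\lceil n/3\rceil)=kn+\lceil n/3\rceil$ and dominates the whole product.

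The crux, and the step I expect to be the main obstacle, is the lemma underlying the construction for $j\in S$: for every $k\ge1$ and every set $T$ of at most two vertices of $C_{3k+1}$, there is a dominating set of $C_{3k+1}$ of cardinality $k+1$ containing $T$. I would prove this by a short case analysis on the distance between the two prescribed vertices, using that $k+1$ vertices can dominate up to $3k+3=n(C_{3k+1})+2$ vertices, which is exactly the slack needed to absorb two vertices placed arbitrarily on the cycle. The fiddly cases are when the two prescribed connecting vertices lie close together, and the bookkeeping is heaviest when $S$ contains two adjacent vertices of $C_n$, so that a single $j\in S$ must accommodate two distinct prescribed vertices; none of this seems to require a new idea, only care.
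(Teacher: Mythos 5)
Your proof is correct, and for the two easy cases it is leaner than the paper's. For $p=0$ and $p=2$ you invoke Theorem~\ref{thm:equ-upper} after checking $\gamma(H|x)=\gamma(H)$ by the counting bound $\gamma(H|x)\ge\lceil(n(H)-1)/3\rceil$; the paper instead proves $p=2$ by essentially the same constant-function argument written out directly (Theorem~\ref{t:thm-3k2Udom}), and for $p=0$ proves the stronger statement $\gs=\Gs=kn$ by analysing an arbitrary $f$ (Theorem~\ref{t:thm-3kdom}) --- your route gives less information but is all that Theorem~\ref{t:thm-main-2} requires. For $p=1$ your overall strategy coincides with the paper's (Theorem~\ref{t:thm1}): the constant function realises the maximum, and an explicit dominating set of size $kn+\lceil n/3\rceil$ works for every $f$. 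The differences are in execution. Your lower bound is a direct double count: layers with only $k$ chosen vertices force a connecting vertex of a neighbouring layer into $D$, that neighbouring layer then needs $k+1$ vertices, so the ``heavy'' layers dominate $C_n$; the paper instead uses an exchange argument on a $\gamma$-set chosen to maximise its intersection with the copy $Gh$ of $C_n$. Your upper bound, built from a dominating set $S$ of $C_n$ and a pointer $\sigma$ from each non-$S$ vertex into $S$, is the paper's mod-$3$ pattern ($D_{i,1},D_{i,2},D_{i,3}$) in different clothing, and has the mild advantage of generalising beyond $G=C_n$. Both arguments hinge on the same key lemma --- every pair of (not necessarily distinct) vertices of $C_{3k+1}$ lies in some $\gamma$-set --- which you correctly isolate as the crux but only sketch; this is exactly property~(b) of Definition~\ref{defn1}, proved in the paper as Proposition~\ref{p:prop1} by deleting $N[\{x,y\}]$ and checking that the leftover paths need only $k-1$ dominators. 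Your sketch is on the right track and the statement is true, so I regard this as an honestly flagged detail rather than a gap.
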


In order to prove Theorems~\ref{t:thm-main-1} and~\ref{t:thm-main-2}, we consider three cases, depending on the value of~$p$.

\subsection{The cycle $C_n$ and cycles $C_{3k+1}$}
\label{S:3k+1}

To determine $\Gamma_{S}(C_n,C_{3k+1})$, we prove a slightly more general result. For this purpose, we define a class of graphs ${\cal H}_k$ as follows.

\begin{definition}
\label{defn1}
For $k \ge 1$, let ${\cal H}_k$ be the class of all graphs $H$ that have the following properties. \\ [-24pt]
\begin{enumerate}
\item $\gamma(H) = k+1$ and $\gamma(H-v) = k$ for every vertex $v \in V(H)$.
\item If $x,y \in V(H)$, then there exists a $\gamma$-set of $H$ that contains $x$ and~$y$, where $x=y$ is allowed.
\end{enumerate}
\end{definition}

We show, for example, that for every $k \ge 1$, the cycle $C_{3k+1}$ belongs to the class ${\cal H}_k$.

\begin{proposition}
\label{p:prop1}
For $k \ge 1$, the class ${\cal H}_k$ of graphs contains the cycle $C_{3k+1}$.
\end{proposition}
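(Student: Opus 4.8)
The plan is to verify the two defining properties of the class $\mathcal{H}_k$ for $H = C_{3k+1}$. For the first property, recall from the Observation that $\gamma(C_{3k+1}) = \lceil (3k+1)/3 \rceil = k+1$, and that deleting a vertex of $C_{3k+1}$ produces the path $P_{3k}$, so $\gamma(C_{3k+1} - v) = \gamma(P_{3k}) = \lceil 3k/3 \rceil = k$; by vertex-transitivity of the cycle this holds for every $v \in V(C_{3k+1})$. So property~(a) is immediate from the Observation.

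The substance of the argument is property~(b): given any two vertices $x, y$ of $C_{3k+1}$ (possibly equal), I must produce a dominating set of size $k+1$ containing both. I would label the vertices $1, 2, \ldots, 3k+1$ cyclically and use the standard "every third vertex" dominating sets. Concretely, note that for any starting vertex $i$, the set $\{i, i+3, i+6, \ldots, i+3(k-1)\}$ of size $k$ dominates all vertices except possibly a short arc near $i-1$ (it dominates $\{i-1, i, \ldots, i+3k-2\}$, a run of $3k$ consecutive vertices, leaving exactly one vertex $i+3k-1 \equiv i-2$ undominated... I would recompute this arc carefully). Then one extra vertex, placed to cover the leftover arc while also being free to equal $x$ or $y$, gives a dominating set of size $k+1$. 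The key point is that there is enough slack: with $k+1$ vertices on a $(3k+1)$-cycle we have one more vertex than the bare minimum spacing requires, so we can "shift" the pattern to pass through any prescribed vertex, and the one surplus vertex can be planted anywhere (in particular at the second prescribed vertex) as long as it plugs the single remaining gap. I would split into the cases according to the cyclic distance between $x$ and $y$, and in each case exhibit the set explicitly.

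Concretely, I expect the cleanest route is: rotate coordinates so that $x = 1$. Start the greedy pattern $\{1, 4, 7, \ldots, 3k-2\}$ (size $k$), which dominates $\{3k+1\} \cup \{1, 2, \ldots, 3k-1\}$, i.e. everything except vertex $3k$. Now the single uncovered vertex is $3k$, and we may add as our $(k+1)$st vertex any vertex in $\{3k-1, 3k, 3k+1\}$. If $y$ lies in that window, or if $y \in \{1,4,\ldots,3k-2\}$ already, we are done. Otherwise, I would instead anchor the pattern at $y$: since the construction is symmetric in which prescribed vertex we "build from", and since we have full rotational freedom, a short case check (based on $y \bmod 3$ relative to $x$) shows that one of the two anchorings, or a mild shift of the block $\{1,4,\ldots,3k-2\}$ by $1$ or $2$, always captures both $x$ and $y$ while still dominating the whole cycle with $k+1$ vertices.

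The main obstacle is purely bookkeeping: making sure that in the boundary cases — $x$ and $y$ close together, $x = y$, or $y$ falling exactly one step outside the natural coverage window — the shifted pattern still dominates all $3k+1$ vertices. This is a finite, elementary verification with no conceptual difficulty; the only care needed is with the cyclic (mod${}^*$) arithmetic near the wrap-around point, and with the small cases $k = 1$ (the $4$-cycle, where $\gamma = 2$ and any two vertices trivially form or extend to a dominating set). I would present property~(b) by exhibiting, for each residue of the cyclic distance $d(x,y) \bmod 3$, an explicit size-$(k+1)$ set and checking domination in one line.
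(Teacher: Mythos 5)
Your argument is correct in outline but takes a genuinely different route from the paper. For property~(b) the paper does not shift greedy patterns at all: it deletes the closed neighbourhood $N[\{x,y\}]$ from $C_{3k+1}$, observes that what remains is a path $P_{3(k-2)+j}$ or a disjoint union of two paths whose orders sum to $3(k-2)+1$, checks in each case that the remainder has domination number exactly $k-1$, and then adjoins $x$ and $y$ to a $\gamma$-set of the remainder. That argument is shorter and avoids cyclic arithmetic, at the cost of a small parity case split on the two path lengths; your construction is more explicit and produces the dominating set directly. Your sketch does close, but be aware that one intermediate claim is not quite right as stated: anchoring the pattern at $x$ or at $y$ alone does \emph{not} handle every case (e.g.\ with $x=1$ and $y=3m$ for $2\le m\le k-1$, neither anchoring contains the other vertex, since the leftover window for the $y$-anchored set is $\{3m-3,3m-2,3m-1\}$). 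The ``mild shift'' you mention is genuinely needed, and the clean way to finish is to fix $x=1$ and take one of the three sets $\{1,4,\dots,3k-2\}\cup\{e\}$ with $e\in\{3k-1,3k,3k+1\}$, $\{1\}\cup\{2,5,\dots,3k-1\}$, or $\{1\}\cup\{3,6,\dots,3k\}$, according to the residue of $y$ modulo~$3$; each has size $k+1$, contains $1$, dominates the cycle, and the three residue classes of $y$ are covered. With that case analysis written out (and the trivial $x=y$ case), your proof is complete and correct.
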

\begin{proof}
For $k \ge 1$, let $H \cong C_{3k+1}$. Since $\gamma(C_n) = \gamma(P_n) = \lceil n/3 \rceil$, property~(a) in Definition~\ref{defn1} holds. To prove that property~(b) in Definition~\ref{defn1} holds, let $x,y \in V(H)$. Since $H$ is vertex-transitive, every specified vertex belongs to some $\gamma$-set of $H$. In particular, if $x = y$, then property~(b) is immediate. Hence, we may assume that $x \ne y$. Let $H$ be the cycle $v_1 v_2 \ldots v_{3k+1} v_1$, where renaming vertices if necessary, we may assume that $x = v_1$. Let $y = v_i$, and so $i \in [3k+1] \setminus \{1\}$.

Let $H' = H - N[\{x,y\}]$, that is, $H'$ is obtained from $H$ by removing $x$ and $y$, and removing all neighbors of $x$ and $y$. If $H'$ is connected, then $H'$ is a path $P_{3(k-2) + j}$ for some $j$ where $j \in [3]$. In this case, $\gamma(H') = k-1$. If $H'$ is disconnected, then $H'$ is the disjoint union of two paths $P_{k_1}$ and $P_{k_2}$, where $k_1 + k_2 = 3(k-2)+1$. Thus renaming $k_1$ and $k_2$ if necessary, we may assume that either $k_1 = 3j_1$ and $k_2 = 3j_2 + 1$ where $j_1 \ge 1$, $j_2 \ge 0$, and $j_1 + j_2 = k-2$ or $k_1 = 3j_1+2$ and $k_2 = 3j_2 + 2$ where $j_1, j_2 \ge 0$ and $j_1 + j_2 = k-3$. In both cases, $\gamma(H') = \lceil k_1/3 \rceil + \lceil k_2/3 \rceil = k - 1$. Letting $D'$ be a $\gamma$-set of $H'$, the set $D = D' \cup \{x,y\}$ is a dominating set of $H$ of cardinality~$k+1 = \gamma(H)$, implying that $D$ is a $\gamma$-set of $H$ that contains both~$x$ and~$y$. Hence, property~(b) holds.
\end{proof}

For $n \ge 3$ an integer, a \emph{circulant graph} $C_n \langle L \rangle$ with a given list $L \subseteq \{1, \ldots, \lfloor \frac{1}{2}n \rfloor\}$ is a graph on $n$ vertices in which the $i$th vertex is adjacent to the $(i+j)$th and $(i-j)$th vertices for each $j$ in the list $L$ and where addition is taken modulo~$n$. For example, for $n = 3k+1$ where $k \ge 1$ and $L = \{1\}$, the circulant graph $C_n \langle L \rangle$ is the cycle $C_{3k+1}$, which, by Proposition~\ref{p:prop1}, belongs to the class ${\cal H}_k$. More generally, for $n = k(2p+1)+1$ where $k \ge 1$, $p \ge 1$, and $L = [p]$, the circulant graph $C_n \langle L \rangle$ belongs to the class ${\cal H}_k$. We omit the relatively straightforward proof. These examples of circulant graphs serve to illustrate that for each $k \ge 1$, one can construct infinitely many graphs in the class ${\cal H}_k$. We determine next the upper Sierpi\'{n}ski domination number $\Gamma_{S}(C_n,H)$ of a cycle $C_n$ and a graph $H$ in the family ${\cal H}_k$.

\begin{theorem}
\label{t:thm1}
For $n \ge 3$ and $k \ge 1$, if $H \in {\cal H}_k$, then
\[
\Gs(C_n,H) = kn+ \left\lceil \dfrac{n}{3} \right\rceil\,.
\]
\end{theorem}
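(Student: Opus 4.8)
The plan is to prove the two inequalities separately: $\Gs(C_n,H)\le kn+\lceil n/3\rceil$, which must hold for \emph{every} $f\in H^{C_n}$, and $\Gs(C_n,H)\ge kn+\lceil n/3\rceil$, for which it suffices to exhibit one ``bad'' $f$ --- I would take $f$ constant. The upper bound uses property~(b) of Definition~\ref{defn1}, while the lower bound uses only property~(a).

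For the upper bound, write $C_n=g_1g_2\cdots g_ng_1$, fix any $f$, and let $S$ be a $\gamma$-set of $C_n$, so $|S|=\lceil n/3\rceil$. In each layer $g_iH$ with $g_i\in S$ I would put a $\gamma$-set $D_i$ of $H$ that contains the connecting vertices $(g_i,f(g_j))$ for the (at most two) neighbours $g_j$ of $g_i$ in $C_n$ with $g_j\notin S$; property~(b), with the equality case $x=y$ allowed, guarantees such a $\gamma$-set exists, so $|D_i|=k+1$. In each layer $g_iH$ with $g_i\notin S$, I would choose one neighbour $g_j\in S$ of $g_i$; then $(g_j,f(g_i))\in D_j$ dominates $(g_i,f(g_j))$ along the connecting edge, so it suffices to take a set $D_i$ of $k$ vertices of $g_iH$ dominating $V(g_iH)\setminus\{(g_i,f(g_j))\}$, which exists since property~(a) gives $\gamma(H-v)=k$. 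A short check shows $D=\bigcup_i D_i$ dominates $C_n\otimes_f H$, and $|D|=(k+1)|S|+k(n-|S|)=kn+\lceil n/3\rceil$.

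For the lower bound, fix $x\in V(H)$ and take $f\equiv x$, so that every connecting edge has the form $(g_i,x)(g_{i+1},x)$ and the only vertex of $g_iH$ incident with a connecting edge is $(g_i,x)$. Let $D$ be any dominating set of $C_n\otimes_f H$, set $d_i=|D\cap V(g_iH)|$ and $T=\{i:(g_i,x)\in D\}$. I would first record the consequence of property~(a): any set $S'$ of vertices of $H$ dominating $V(H)\setminus\{x\}$ (or $V(H)\setminus N_H[x]$) satisfies $|S'|\ge\gamma(H)-1=k$, since $S'\cup\{x\}$ dominates $H$. Then for each $i$: if $\{i-1,i,i+1\}\cap T=\emptyset$, the vertex $(g_i,x)$ is dominated only from inside $g_iH$, forcing $d_i\ge\gamma(H)=k+1$; if $i\in T$, the remaining $d_i-1$ vertices of $D\cap V(g_iH)$ must dominate $V(g_iH)\setminus N_H[x]$, so $d_i\ge 1+k=k+1$; otherwise $(g_i,x)$ is dominated from outside but the rest of $g_iH$ from inside, so $d_i\ge k$. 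Summing over $i$,
\[
|D|\ \ge\ kn+|T|+\big(n-|N_{C_n}[T]|\big).
\]
Finally $T\cup\big([n]\setminus N_{C_n}[T]\big)$ is a dominating set of $C_n$, hence $|T|+\big(n-|N_{C_n}[T]|\big)\ge\gamma(C_n)=\lceil n/3\rceil$, giving $|D|\ge kn+\lceil n/3\rceil$.

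I expect the only real work to be the per-layer bookkeeping in the lower bound --- in particular arguing cleanly that $(g_i,x)\in D$ already forces the layer to be ``heavy'' and that a layer helped from outside still costs $k$ --- while the reformulation $|T|+|V(C_n)\setminus N_{C_n}[T]|\ge\gamma(C_n)$ is the observation that makes the three cases collapse to exactly the required $\lceil n/3\rceil$. Since property~(b) carries the upper bound and property~(a) the lower bound, I do not anticipate a serious obstacle beyond organising these cases.
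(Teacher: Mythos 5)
Your proof is correct, and at the top level it is the same strategy as the paper's: the constant function $f\equiv x$ together with property~(a) delivers the lower bound, and for an arbitrary $f$ a dominating set patterned on a $\gamma$-set of $C_n$, with property~(b) supplying the $(k+1)$-element layers at the dominating positions and property~(a) the $k$-element layers elsewhere, delivers the upper bound. The one place where you genuinely diverge is the execution of the lower bound. The paper fixes $f\equiv h$, chooses among all $\gamma$-sets of $C_n\otimes_f H$ one maximizing its intersection with the copy $Gh$ of $C_n$, and uses a replacement argument to show that this intersection must dominate $Gh$, from which the count $kn+|D^*_G|\ge kn+\gamma(C_n)$ follows. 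You instead take an arbitrary dominating set $D$, split the layers into three types according to $T=\{i:(g_i,x)\in D\}$, and collapse the resulting bound $|D|\ge kn+|T|+\bigl(n-|N_{C_n}[T]|\bigr)$ via the observation that $T\cup\bigl([n]\setminus N_{C_n}[T]\bigr)$ dominates $C_n$. Your version avoids the extremal choice and the replacement step entirely, and is arguably more robust (it bounds every dominating set directly rather than a specially chosen one); the paper's version is shorter once the extremal set is in hand. Your upper bound, phrased over an arbitrary $\gamma$-set $S$ of $C_n$ rather than the explicit residue-mod-$3$ pattern the paper uses, is a mild generalization of the same construction. I see no gaps: the only points needing care --- that property~(b) covers the case of two (possibly coincident) prescribed connecting vertices in a layer of $S$, and that a layer whose sole connecting vertex toward $T$ is helped from outside still costs $k$ --- are both handled.
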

\begin{proof}
For $n \ge 3$ and $k \ge 1$, let $G \cong C_n$ and let $H \in {\cal H}_k$. Let $G$ be the cycle given by $g_1 g_2 \ldots g_n g_1$. In what follows, we adopt the following notation. For each $i \in [n]$, we denote the copy $g_iH$ of $H$ corresponding to the vertex $g_i$ simply by $H_i$. We proceed further with two claims. The first claim establishes a lower bound on $\Gs(C_n,H)$, and the second claim establishes an upper bound on $\Gs(C_n,H)$.

\begin{claim}
\label{claim:1}
$\Gs(C_n,H) \ge kn + \left\lceil \dfrac{n}{3} \right\rceil\,$.
\end{claim}
\proof Let $f \colon V(G) \rightarrow V(H)$ be a constant function, that is, we select $h \in V(H)$ and for every vertex $g \in V(G)$, we set $f(g) = h$. Let $D_G$ be a $\gamma$-set of $G$. Thus, $|D_G| = \gamma(C_n) = \lceil n/3 \rceil$. By property~(a) in Definition~\ref{defn1}, for every vertex $g \in V(G)$, there exists a $\gamma$-set of $gH$ that contains the vertex~$(g,f(g)) = (g,h)$. If $g \in D_G$, let $D_g$ be a $\gamma$-set of $gH$ that contains the vertex~$(g,f(g)) = (g,h)$, and so $|D_g| = \gamma(H) = k+1$. If $g \in V(G) \setminus D_G$, let $D_g$ be a $\gamma$-set of $gH - (g,f(g)) = gH - (g,h)$, and so in this case $|D_g| = \gamma(H-h) = \gamma(H) - 1 = k$. Let
\[
D = \bigcup_{g \in V(G)} D_g.
\]

The set $D$ is a dominating set of $G \otimes _f H$, and so
\begin{equation}
\label{Eq1a}
\gamma(G \otimes _f H) \le |D| = \gamma(G) (k+1) + (n - \gamma(G)) k = kn + \gamma(G) = kn + \left\lceil \dfrac{n}{3} \right\rceil\,.
\end{equation}

For the fixed vertex $h$ chosen earlier, we note that the set of vertices $(g,h)$ for all $g \in V(G)$ induces a subgraph of $G \otimes _f H$ that is isomorphic to $G \cong C_n$. We denote this copy of $G$ by $Gh$. Among all $\gamma$-sets of $G \otimes _f H$, let $D^*$ be chosen to contain as many vertices of $Gh$ as possible. Let $D^*_g = D^* \cap V(gH)$ for every $g \in V(G)$. Further let $D^*_G = \{(g,h) \in D^* \colon g \in V(G)\}$, that is, $D^*_G$ is the restriction of $D^*$ to the copy of $G$. If a vertex $(g,h) \notin D^*_G$ and $(g,h)$ is not dominated by $D^*_G$, then $D^*_g$ is a $\gamma$-set of $gH$ by the minimality of the set $D^*$. However in this case, we could replace the set $D^*_g$ be a $\gamma$-set of $gH$ that contains the vertex $(g,h)$ to produce a new $\gamma$-set of $G \otimes _f H$ that contains more vertices from the copy of $G$ than does $D^*$, a contradiction. Hence, the set $D^*_G$ is a dominating set in the copy of $G$, and so $|D^*_g(G)| \ge \gamma(G)$. By the minimality of the set $D^*$ and by property~(a) in Definition~\ref{defn1}, for each vertex $g \in V(G)$, we have $|D^*_g| = \gamma(H) = k+1$ if the vertex $(g,h) \in D^*_G$ and $|D^*_g| = \gamma(H - h) = k$ if the vertex $(g,h) \notin D^*_G$. Therefore,
\begin{equation}
\label{Eq1b}
\gamma(G \otimes _f H) = |D^*| = |D^*_G| (k+1) + (n - |D^*_G|) k = kn + |D^*_G| \ge kn + \gamma(G) = kn + \left\lceil \dfrac{n}{3} \right\rceil\,.
\end{equation}

By inequalities~(\ref{Eq1a}) and~(\ref{Eq1b}), we have
\begin{equation}
\label{Eq1c}
\gamma(G \otimes _f H) = kn + \left\lceil \dfrac{n}{3} \right\rceil\,.
\end{equation}

By equation~(\ref{Eq1c}), we have $\Gs(C_n,H) \ge \gamma(G \otimes _f H)= kn + \lceil n/3 \rceil\,$. This completes the proof of Claim~\ref{claim:1}.~\smallqed \1

\begin{claim}
\label{claim:2}
$\Gs(C_n,H) \le kn + \left\lceil \dfrac{n}{3} \right\rceil\,$.
\end{claim}
\proof
Let $f \colon V(G) \rightarrow V(H)$ be an arbitrary function. Let $H_i$ be the $i$th copy of $H$ corresponding to the vertex $g_i$ of $G$ for all $i \in [n]$. Let $D$ be the dominating set of $G \otimes _f H$ constructed as follows. Let $x_iy_{i+1}$ be the connecting edge from $H_i$ to $H_{i+1}$ for all $i \in [n]$, where addition is taken modulo~$n$. Thus, the vertex $x_i \in V(H_i)$ is adjacent to the vertex $y_{i+1} \in V(H_{i+1})$ in the graph $G \otimes _f H$, that is, $x_i = (g_i,f(g_{i+1}))$ and $y_{i+1} = (g_{i+1},f(g_{i}))$. We note that possibly $x_i = y_i$. By property~(b) in Definition~\ref{defn1}, there exists a $\gamma$-set of $H_i$ that contains both $x_i$ and $y_i$. For $i \in [n]$, we define the sets $D_{i,1}$, $D_{i,2}$, and $D_{i,3}$ as follows. Let $D_{i,1}$ be a $\gamma$-set of $H_i - x_i$. Let $D_{i,2}$ be a $\gamma$-set of $H_i$ that contains both $x_i$ and $y_i$. Let $D_{i,3}$ be a $\gamma$-set of $H_i - y_i$. We note that $|D_{i,1}| = |D_{i,3}| = k$ and $|D_{i,2}| = k+1$. For $i \in [n]$, we define the set $D_i$ as follows.
\[
D_i =
\left\{
\begin{array}{cl}
D_{i,1};  & \mbox{$i \equiv 1 \, (\modo \, 3)$ and $i \ne n$,} \2 \\
D_{i,2};  & \mbox{$i \equiv 2 \, (\modo \, 3)$ or $i \equiv 1 \, (\modo \, 3)$ and $i = n$,} \2 \\
D_{i,3};  & \mbox{$i \equiv 0 \, (\modo \, 3)$}.
\end{array}
\right.
\]

For example, the set $D_1$ dominates all vertices of $H_1 - x_1$. The set $D_2$ contains the vertex $y_2$, which is adjacent to the vertex $x_1$ of $H_1$, and contains the vertex $x_2$, which is adjacent to the vertex $y_3$ of $H_3$, implying that $D_2$ dominates the vertex $x_1$ of $H_1$, all vertices of $H_2$, and the vertex $y_3$ of $H_3$. The set $D_3$ dominates all vertices of $H_3 - y_3$. Thus, $D_1 \cup D_2 \cup D_3$ dominates all vertices in $V(H_1) \cup V(H_2) \cup V(H_3)$ in the Sierpi\'{n}ski product $G \otimes _f H$. Moreover, $|D_1| + |D_2| + |D_3| = k + (k+1) + k = 3k+1$. More generally, the set $D_{3j-2} \cup D_{3j-1} \cup D_{3j}$ dominates all vertices in $V(H_{3j-2}) \cup V(H_{3j-1}) \cup V(H_{3j})$ in the Sierpi\'{n}ski product $G \otimes _f H$ for all $j \in \{1,\ldots,\lfloor n/3 \rfloor\}$. Moreover, $|D_{3j-2}| + |D_{3j-1}| + |D_{3j}| = k + (k+1) + k = 3k+1$. If $n \equiv 1 \, (\modo \, 3)$, then the set $D_n$ is a $\gamma$-set of $H_n$, and in this case $|D_n| = k+1$. If $n \equiv 2 \, (\modo \, 3)$, then the set $D_{n-1} \cup D_n$ dominates all vertices in $V(H_{n-1}) \cup V(H_n)$, and in this case $|D_{n-1}| + |D_n| = k + (k+1) = 2k+1$. The set
\[
D = \bigcup_{i=1}^n D_i
\]
is therefore a dominating set of $G \otimes _f H$, implying that
\[
\gamma(G \otimes _f H) \le |D| = \sum_{i=1}^n |D_i| = kn + \left\lceil \dfrac{n}{3} \right\rceil\,.
\]
This completes the proof of Claim~\ref{claim:2}.~\smallqed

\medskip
The proof of Theorem~\ref{t:thm1} follows as an immediate consequence of Claims~\ref{claim:1} and~\ref{claim:2}.
\end{proof}

\medskip
As a consequence of Proposition~\ref{p:prop1}, we have the following special case of Theorem~\ref{t:thm1}.

\begin{corollary}
\label{t:thm-3k1Udom}
For $n \ge 3$ and $k \ge 1$,
\[
\Gs(C_n,C_{3k+1}) = kn + \left\lceil \dfrac{n}{3} \right\rceil\,.
\]
\end{corollary}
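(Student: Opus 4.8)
The plan is to obtain the corollary directly from the two results just established: Theorem~\ref{t:thm1}, which evaluates $\Gs(C_n,H)$ for every graph $H$ in the class ${\cal H}_k$, and Proposition~\ref{p:prop1}, which certifies that the cycle $C_{3k+1}$ is itself a member of ${\cal H}_k$. So the only thing to do is to check that the hypotheses of Theorem~\ref{t:thm1} are met with $H = C_{3k+1}$, and then read off the formula.

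Concretely, first I would recall from Proposition~\ref{p:prop1} that $C_{3k+1} \in {\cal H}_k$ for every $k \ge 1$. This relies on $\gamma(C_n) = \gamma(P_n) = \lceil n/3 \rceil$, which gives property~(a) of Definition~\ref{defn1} (namely $\gamma(C_{3k+1}) = k+1$ and $\gamma(C_{3k+1} - v) = k$ for every vertex $v$), and on vertex-transitivity of $C_{3k+1}$ together with the case analysis on the components of $H - N[\{x,y\}]$, which produces a $\gamma$-set passing through any prescribed (possibly equal) pair of vertices, giving property~(b). Then, since $n \ge 3$, $k \ge 1$, and $H := C_{3k+1}$ lies in ${\cal H}_k$, Theorem~\ref{t:thm1} yields
\[
\Gs(C_n, C_{3k+1}) = \Gs(C_n, H) = kn + \left\lceil \frac{n}{3} \right\rceil,
\]
which is exactly the claimed equality.

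There is essentially no obstacle at this stage: the substantive content — the lower bound obtained via a constant function $f$ and a $\gamma$-set of $C_n$, and the matching upper bound obtained via the block construction on three consecutive $H$-layers using the sets $D_{i,1}, D_{i,2}, D_{i,3}$ — is entirely contained in the proof of Theorem~\ref{t:thm1}, while the verification that $C_{3k+1}$ has the two required properties is Proposition~\ref{p:prop1}. The one remark worth making explicitly is that property~(a) is not vacuous for $C_{3k+1}$ (its domination number really is $k+1$, and deleting any one vertex drops it to $k$), and that vertex-transitivity is precisely what makes property~(b) routine; both points are already recorded in the preceding discussion, so the corollary follows immediately.
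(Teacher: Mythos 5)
Your proposal is correct and matches the paper exactly: the corollary is stated there as an immediate consequence of Proposition~\ref{p:prop1} (which places $C_{3k+1}$ in ${\cal H}_k$) combined with Theorem~\ref{t:thm1}, which is precisely the two-step deduction you give.
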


We consider next the Sierpi\'{n}ski domination number of $C_n$ and $C_{3k+1}$, and show that $\gs(C_n, C_{3k+1}) = kn$ if $n \equiv 0 \, (\modo \, 4)$ and $\gs(C_n, C_{3k+1}) \in \{kn, kn + 1\}$, otherwise.

\begin{theorem}
\label{t:thm-3k1dom}
For $n \ge 3$ and $k \ge 1$,
\[
\gs(C_n,C_{3k+1}) \in \{ kn, kn+1\}.
\]
Moreover, if $n \equiv 0 \mod 4$, then $\gs(C_n,C_{3k+1})  = kn$.
\end{theorem}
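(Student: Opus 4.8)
The lower bound is immediate from Theorem~\ref{thm:elementary}: since $\gamma(C_{3k+1})=k+1$ and $m(C_n)=n$, we get $\gs(C_n,C_{3k+1})\ge n(k+1)-n=kn$. So the whole task is to exhibit, for every $n$, a function $f$ together with a dominating set of $C_n\otimes_f C_{3k+1}$ of size at most $kn+1$, and, when $n\equiv 0\pmod 4$, one of size $kn$.

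The plan is to build the dominating set copy by copy. Write $H=C_{3k+1}$ on the vertex set $\bbZ_{3k+1}$; for a vertex $u$ let $D(u)=\{u+2,u+5,\dots,u+(3k-1)\}$, a set of $k$ vertices that is a $\gamma$-set of the path $C_{3k+1}-u\cong P_{3k}$ and hence dominates every vertex of $H$ except $u$. Label the copies of $H$ around $C_n$ by $H_1,\dots,H_n$ and write the connecting edge between $H_i$ and $H_{i+1}$ as $a_ib_{i+1}$, where $a_i=(g_i,f(g_{i+1}))$ and $b_i=(g_i,f(g_{i-1}))$; I identify $a_i$ and $b_i$ with the elements $f(g_{i+1}),f(g_{i-1})$ of $\bbZ_{3k+1}$. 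The key local step is: if copy $H_i$ contributes the $k$-set $D(b_i)$, then the only vertex of $H_i$ left undominated from inside $H_i$ is $b_i$, and $b_i$ is dominated across the connecting edge $a_{i-1}b_i$ precisely when $a_{i-1}\in D(b_{i-1})$, i.e.\ when $f(g_i)-f(g_{i-2})\in S:=\{2,5,\dots,3k-1\}$. As $0\notin S$, this condition also guarantees $a_{i-1}\ne b_{i-1}$, sidestepping the degenerate case where the two connecting vertices of a copy coincide.

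For the bound $kn+1$ (all $n$), make $H_1$ ``special'': give it a genuine $\gamma$-set $D_1$ of $H$ of size $k+1$ containing the vertex $f(g_2)$, which exists since $C_{3k+1}$ is vertex-transitive, and give every other copy $H_i$ the $k$-set $D(b_i)=D(f(g_{i-1}))$. Then $D_1$ dominates all of $H_1$, so $b_2$ is dominated from $H_1$ and the wrap-around edge $g_ng_1$ causes no trouble, while for $i=3,\dots,n$ domination of $b_i$ only requires $f(g_i)-f(g_{i-2})\in S$. Since one copy is special there is no cyclic constraint, so one may simply take $f(g_1)=0$, choose $f(g_2)$ arbitrarily, and set $f(g_i)=f(g_{i-2})+2$ for $i=3,\dots,n$ (valid as $2\in S$); the resulting set has size $(k+1)+(n-1)k=kn+1$.

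To sharpen this to $kn$ when $n\equiv 0\pmod 4$, drop the special copy: let every $H_i$ contribute $D(f(g_{i-1}))$, so the total is exactly $kn$, and now $f(g_i)-f(g_{i-2})\in S$ must hold cyclically for all $i\in\bbZ_n$. Setting $d_i:=f(g_{i-1})$, this says $d_{i+1}-d_{i-1}\in S$ for all $i\in\bbZ_n$; as $n$ is even the indices split into two parity chains of length $n/2$, so it suffices to choose, for each parity, a list of $n/2$ elements of $S$ summing to $0$ modulo $3k+1$. This is where $4\mid n$ enters: then $n/2$ is even and one can pair $2\in S$ with $3k-1\in S$ (note $2+(3k-1)\equiv 0$), giving such a list of $n/4$ pairs --- with $k=1$ also covered, since then the two paired elements are both $2$ and $n/2$ copies of $2$ sum to $n\equiv 0\pmod 4$. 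Taking the $d_i$ as prescribed and $f(g_j)=d_{j+1}$ gives a dominating set of size $kn$, so $\gs(C_n,C_{3k+1})=kn$. I expect the bulk of the write-up to be routine bookkeeping --- checking that $D(u)$ dominates $H-u$, keeping straight which connecting vertex of a copy is $a_i$ and which is $b_i$, and verifying that the increment $2$ lies in $S$ --- with the single real idea being that one ``special'' copy removes the cyclic summation obstruction (hence $kn+1$ for every $n$), while for $n\equiv 0\pmod 4$ that obstruction can in fact be satisfied; the point most in need of care is the small number of degenerate configurations, which are harmless for an upper bound but should be noted.
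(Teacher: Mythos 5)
Your proof is correct and follows essentially the same route as the paper's: the lower bound comes from Theorem~\ref{thm:elementary}, and the upper-bound construction uses the same per-layer $k$-sets (your $D(u)$ is exactly the paper's set of vertices at distance $2$ modulo $3$ from the incoming connecting vertex) together with a function $f$ whose second differences alternate between $2$ and $-2$, which is precisely the paper's period-four assignment $1,1,3,3,\ldots$ of values to the $g_i$. The only difference is cosmetic: for $n\not\equiv 0\pmod 4$ the paper adds one extra vertex to the same set $D$, whereas you upgrade one layer to a full $\gamma$-set of size $k+1$; both yield $kn+1$.
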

\begin{proof}
For $n \ge 3$ and $k \ge 1$, let $G = C_n$ and let $H = C_{3k+1}$. Let $G$ be the cycle given by $g_1 g_2 \ldots g_n g_1$. We adopt our notation employed in our earlier proofs. For notational convenience, we let $V(H) = \{1,2,\ldots, 3k+1\}$ where vertices $i$ and $i+1$ are consecutive on the cycle $H$ for all $i \in [3k+1]$ (and where addition is taken modulo~$3k+1$, and so vertex $1$ and vertex $3k+1$ are adjacent).

As before, we denote the copy $g_iH$ of $H$ corresponding to the vertex $g_i$ simply by $H_i$ for each $i \in [n]$. Thus, $H_i = C_{3k+1}$ is the cycle $(g_i,1), (g_i,2), \ldots, (g_i,3k+1), (g_i,1)$ for all $i \in [n]$. Recall that we denote the connecting edge from $H_i$ to $H_{i+1}$ by $x_iy_{i+1}$ for all $i \in [n]$, where $x_i \in V(H_i)$, $y_{i+1} \in V(H_{i+1})$, and addition is taken modulo~$n$. Thus, $y_i = (g_i,f(g_{i-1}))$ and $x_i = (g_i,f(g_{i+1}))$ for all $i \in [n]$.

By Proposition~\ref{p:prop1}, the graph $H$ belongs to the class ${\cal H}_k$. Thus, $\gamma(H) = k+1$ and $\gamma(H-v) = k$ for every vertex $v \in V(H)$. Furthermore, if $x,y \in V(H)$ where $x=y$ is allowed, then there exists a $\gamma$-set of $H$ that contains $x$ and~$y$.

% \begin{claim}
% \label{claim-3k1dom-1}
% $\gs(C_n,H) \ge kn$ and this bound is sharp also for $n \not\equiv 0 \mod 4$.
% \end{claim}
% \proof
% Let $f \colon V(G) \rightarrow V(H)$ be an arbitrary function. We show that
% \begin{equation}
% \label{Eq-3k1dom-1}
% \gamma(G \otimes _f H) \ge kn.
% \end{equation}

By the elementary lower bound on the Sierpi\'{n}ski domination number given in Theorem~\ref{thm:elementary}, $\gs(G,H) \ge n(G)\gamma(H)- m(G) = kn$, noting that here $n(G) = m(G) = n$ and $\gamma(H) = k+1$. It follows that $\gs(C_n,H) \ge kn$.

To complete the proof we are going to prove that 
% \begin{claim}
% \label{claim-3k1dom-2}
$$\gs(C_n,H) \le kn + \left\lceil \dfrac{n}{4} \right\rceil - \left\lfloor \dfrac{n}{4} \right\rfloor\,.$$
% \end{claim}
Let $f \colon V(G) \rightarrow V(H)$ be the function defined by
\[
f(g_i) =
\left\{
\begin{array}{ll}
1; & \mbox{$i \bmod 4 \in \{1,2\}$}, \1 \\
3; & \mbox{otherwise}.
\end{array}
\right.
\]
for all $i \in [n]$ where addition is taken modulo~$n$. Adopting our earlier notation, recall that $y_i = (g_i,f(g_{i-1}))$ and $x_i = (g_i,f(g_{i+1}))$ for all $i \in [n]$. Let $n = 4\ell + j$ where $j \in [3]_0 = \{0,1,2,3\}$. We note that $f(g_{4i-3}) = f(g_{4i-2}) = 1$ and $f(g_{4i-1}) = f(g_{4i}) = 3$ for all $i \in [\ell]$. Let $D_i$ be the unique $\gamma$-set of $H_i - y_i \cong P_{3k}$ which consists of all vertices at distance~$2$ modulo~$3$ from $y_i$ in the graph $H_i$ for all $i \in [n]$, and let
\[
D = \bigcup_{i=1}^n D_i.
\]
We note that $|D_i| = k$ for all $i \in [n]$, and so $|D| = kn$. For all $i \in \{2,3,\ldots,\ell-1\}$, the following four properties hold. \\ [-24pt]
\begin{enumerate}
\item[] {\rm (P1)}: $y_{4i-3} = (g_{4i-3},3)$ and $x_{4i-3} = (g_{4i-3},1)$.
\item[] {\rm (P2)}: $y_{4i-2} = (g_{4i-2},1)$ and $x_{4i-2} = (g_{4i-2},3)$.
\item[] {\rm (P3)}: $y_{4i-1} = (g_{4i-1},1)$ and $x_{4i-1} = (g_{4i-1},3)$.
\item[] {\rm (P4)}: $y_{4i} = (g_{4i},3)$ and $x_{4i} = (g_{4i},1)$.
\end{enumerate}

Hence for all $i \in \{2,3,\ldots,\ell-1\}$, the vertices $x_i$ and $y_i$ are at distance~$2$ in $H_i$, implying that $x_i \in D_i$. We consider four cases to determine which properties hold for the boundary conditions (that is for $i\in\{1,\ell\})$ and finally to set the upper bound on the domination number in each case.

\medskip
\emph{Case~1. $n \equiv 0 \, (\modo \, 4)$, that is $n=4\ell$.}

\noindent In this case, properties P1 and P4 also hold for $i = 1$ and $i = \ell$, respectively. Thus, $y_{1} = (g_{1},3)$ and $x_{1} = (g_{1},1)$, and $y_{4\ell} = (g_{4\ell},3)$ and $x_{4\ell} = (g_{4\ell},1)$, implying that $x_1,x_{4\ell} \in D$. The set $D$ is therefore a dominating set of $G \otimes _f H$, and so $\gamma(G \otimes _f H) \le |D| = kn = kn + \lceil n/4 \rceil - \lfloor n/4 \rfloor$.

\medskip
\emph{Case~2. $n \equiv 1 \, (\modo \, 4)$, that is $n=4\ell+1$.}

\noindent In this case, $y_{1} = x_{1} = (g_{1},1)$, and $y_{4\ell+1} = (g_{4\ell+1},3)$ and $x_{4\ell+1} = (g_{4\ell+1},1)$. In particular, property P4 also holds for $i = \ell$, and so $x_{4\ell+1} \in D$. The set $D \cup \{x_1\}$ is therefore a dominating set of $G \otimes _f H$, and so $\gamma(G \otimes _f H) \le |D| + 1 = kn + 1 = kn + \lceil n/4 \rceil - \lfloor n/4 \rfloor$.

\medskip
\emph{Case~3. $n \equiv 2 \, (\modo \, 4)$, that is $n=4\ell+2$.}

\noindent In this case, $y_{1} = x_{1} = (g_{1},1)$, and $y_{4\ell+2} = x_{4\ell+2} = (g_{4\ell+2},1)$. We note that neither $x_1$ nor $x_{4\ell+2}$ belong to the set $D$. The set $D \cup \{x_1\}$ is a dominating set of $G \otimes _f H$, and so $\gamma(G \otimes _f H) \le |D| + 1 = kn + 1 = kn + \lceil n/4 \rceil - \lfloor n/4 \rfloor$.

\medskip
\emph{Case~4. $n \equiv 3 \, (\modo \, 4)$, that is $n=4\ell+3$.}

\noindent In this case, $y_{1} = (g_{1},3)$ and $x_{1} = (g_{1},1)$, and $y_{4\ell+3} = x_{4\ell+3} = (g_{4\ell+3},1)$. In particular, property P1 also holds for $i = 1$, and so $x_1 \in D$. However, $x_{4\ell+3} \notin D$. The set $D \cup \{x_{4\ell+3}\}$ is therefore a dominating set of $G \otimes _f H$, and so $\gamma(G \otimes _f H) \le |D| + 1 = kn + 1 = kn + \lceil n/4 \rceil - \lfloor n/4 \rfloor$.

In all four cases, $\gamma(G \otimes _f H) \le kn + \lceil n/4 \rceil - \lfloor n/4 \rfloor$. 
\end{proof}

\subsection{The cycle $C_n$ and cycles $C_{3k+2}$}

In this section, we determine the Sierpi\'{n}ski domination number $\gamma_{S}(C_n,C_{3k+2})$ and the upper Sierpi\'{n}ski domination number $\Gamma_{S}(C_n,C_{3k+2})$.

\begin{theorem}
\label{t:thm-3k2Udom}
For $n \ge 3$ and $k \ge 1$, we have $\Gs(C_n,C_{3k+2}) = (k+1)n$.
\end{theorem}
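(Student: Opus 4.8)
The plan is to recognize that $C_{3k+2}$ is exactly the kind of graph $H$ that forces equality in the upper bound $\Gs(G,H) \le n(G)\gamma(H)$ of Theorem~\ref{thm:elementary}, and then to invoke Theorem~\ref{thm:equ-upper}. Since $\gamma(C_{3k+2}) = \lceil (3k+2)/3 \rceil = k+1$, Theorem~\ref{thm:elementary} already gives $\Gs(C_n,C_{3k+2}) \le n(k+1) = (k+1)n$, so the task reduces to establishing the matching lower bound.

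By Theorem~\ref{thm:equ-upper}, it suffices to exhibit a vertex $x \in V(C_{3k+2})$ with $\gamma(C_{3k+2}|x) = \gamma(C_{3k+2}) = k+1$; by vertex-transitivity of $C_{3k+2}$ any vertex $x$ will work. For the upper estimate $\gamma(C_{3k+2}|x) \le k+1$, note that $C_{3k+2} - x \cong P_{3k+1}$, so a minimum dominating set of this path, which has size $\lceil (3k+1)/3 \rceil = k+1$, dominates every vertex of $C_{3k+2}$ except possibly $x$. For the lower estimate $\gamma(C_{3k+2}|x) \ge k+1$, let $S \subseteq V(C_{3k+2})$ be any set dominating all $3k+1$ vertices of $C_{3k+2} - x$. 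Every vertex $v$ of the cycle satisfies $|N[v] \setminus \{x\}| \le 3$, with the value being $2$ precisely when $v$ equals $x$ or a neighbour of $x$; hence $3|S| \ge \sum_{v \in S} |N[v]\setminus\{x\}| \ge |V(C_{3k+2})\setminus\{x\}| = 3k+1$, giving $|S| \ge \lceil (3k+1)/3 \rceil = k+1$.

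Putting these together, $\gamma(C_{3k+2}|x) = k+1 = \gamma(C_{3k+2})$, and Theorem~\ref{thm:equ-upper} yields $\Gs(C_n,C_{3k+2}) = n(C_n)\gamma(C_{3k+2}) = (k+1)n$, as claimed.

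No step here is genuinely difficult; the only place demanding a little care is the lower bound $\gamma(C_{3k+2}|x) \ge k+1$, where one must verify that placing a dominating vertex at $x$ or at a neighbour of $x$---each of which covers only two of the $3k+1$ vertices that still need to be dominated---cannot push the count below $k+1$, which is why the crude bound $|S \cup \{x\}| \ge \gamma(C_{3k+2})$ (giving only $|S| \ge k$) is insufficient and the counting argument is needed. If one prefers to bypass Theorem~\ref{thm:equ-upper}, the conclusion also follows directly: take $f$ to be the constant function equal to $x$, so that every connecting edge of $C_n \otimes_f C_{3k+2}$ has the form $(g_i,x)(g_{i+1},x)$; then within any $\gamma$-set the vertices lying in a fixed layer $H_i$ must dominate all of $H_i$ other than possibly $(g_i,x)$, and hence number at least $\gamma(C_{3k+2}|x) = k+1$, and summing over the $n$ layers gives $\gamma(C_n \otimes_f C_{3k+2}) \ge (k+1)n$.
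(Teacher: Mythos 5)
Your proposal is correct and, once Theorem~\ref{thm:equ-upper} is unwound, amounts to the same argument as the paper's: both use the constant function $f\equiv x$ and the fact that a layer must contain at least $k+1$ vertices because only $(g,x)$ can be dominated from outside, which is precisely the statement $\gamma(C_{3k+2}|x)=\gamma(C_{3k+2})=k+1$. The only cosmetic difference is that you verify $\gamma(C_{3k+2}|x)\ge k+1$ by a neighbourhood-counting bound, whereas the paper splits into the cases $(g,x)\in D_g$ (so $D_g$ dominates $C_{3k+2}$) and $(g,x)\notin D_g$ (so $D_g$ dominates $P_{3k+1}$), both giving $k+1$; your explicit appeal to Theorem~\ref{thm:equ-upper} is a legitimate streamlining the paper could also have used.
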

\begin{proof} For $n \ge 3$ and $k \ge 1$, let $G \cong C_n$ and let $H \cong C_{3k+2}$. Let $f \colon V(G) \rightarrow V(H)$ be a constant function, that is, we select $h \in V(H)$ and for every vertex $g \in V(G)$, we set $f(g) = h$. For each vertex $g \in V(G)$, let $H_g$ denote the copy of $H$ associated with the vertex~$g$. Let $D$ be a dominating set of $G \otimes _f H$, and let $D_g = D \cap V(H_g)$, and so $D_g$ is the restriction of $D$ to the copy $H_g$ of $H$. If the vertex $(g,h)$ does not belong to $D_g$, then $D_g$ dominates all vertices on the path $H_g - (g,h) \cong P_{3k+1}$, and so $|D_g| \ge \gamma(P_{3k+1}) = k + 1$. If the vertex $(g,h)$ does belong to $D_g$, then $D_g$ dominates all vertices on the cycle $H_g \cong C_{3k+2}$, and so $|D_g| \ge \gamma(C_{3k+2}) = k + 1$. In both cases, $|D_g| \ge k+1$. Therefore,
\[
\gamma(G \otimes _f H) = |D| = \sum_{g \in V(G)} |D_g| \ge (k+1)n,
\]
implying that $\Gs(C_n,C_{3k+2}) \ge (k+1)n$. By the upper bound in Theorem~\ref{thm:elementary}, we have $\Gs(G,H) \le n(G)\gamma(H) = (k+1)n$, noting that in this case $\gamma(H) = \gamma(C_{3k+2}) = k+1$. Consequently, $\Gs(C_n,C_{3k+2}) = (k+1)n$.
\end{proof}

\begin{theorem}
\label{t:thm-3k2dom}
For $n \ge 3$ and $k \ge 1$,
\[
\gs(C_n,C_{3k+2}) \in \{ kn + \left\lfloor \dfrac{n}{2} \right\rfloor, kn + \left\lfloor \dfrac{n}{2} \right\rfloor+1\}.
\]
Moreover, if $n \equiv 0 \mod 4$, then $\gs(C_n,C_{3k+2})  = kn + \left\lfloor \dfrac{n}{2} \right\rfloor$.

\end{theorem}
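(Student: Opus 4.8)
The plan is to combine a lower bound $\gs(C_n,C_{3k+2}) \ge kn+\lceil n/2\rceil$ — which already gives $\gs \ge kn+\lfloor n/2\rfloor$, and gives $\gs\ge kn+n/2$ when $4\mid n$ — with an explicit $f$ for which $\gamma(C_n\otimes_f C_{3k+2})\le kn+\lfloor n/2\rfloor+1$, this bound dropping to $kn+n/2$ when $4\mid n$; the two then pin down $\gs$ as claimed. Write $G=C_n=g_1\cdots g_ng_1$, $H=C_{3k+2}$, $H_i=g_iH$, and, as in the earlier proofs, $y_i=(g_i,f(g_{i-1}))$, $x_i=(g_i,f(g_{i+1}))$ for the connecting vertices of $H_i$, with $H_i$ meeting $H_{i-1}$ in the edge $x_{i-1}y_i$ and $H_{i+1}$ in $x_iy_{i+1}$; recall $\gamma(H)=\gamma(H-v)=k+1$ and $\gamma(P_{3k})=k$, the latter $\gamma$-set being unique.

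For the lower bound, fix $f$, take a $\gamma$-set $D$ of $G\otimes_f H$, and put $D_i=D\cap V(H_i)$. As only $x_i$ and $y_i$ of $H_i$ have neighbours outside $H_i$, the at most $3k$ vertices of $H_i$ dominated by $D_i$ must cover all of $V(H_i)$ save possibly $\{x_i,y_i\}$, so $|D_i|\ge\lceil (3k+2-2)/3\rceil = k$. Call $i$ \emph{light} when $|D_i|=k$. For a light $i$, the count is then exactly tight: $x_i\ne y_i$ and $D_i$ dominates precisely $V(H_i)\setminus\{x_i,y_i\}$, so $x_i$ is undominated inside $H_i$ and must be dominated through its unique external neighbour $y_{i+1}$, forcing $y_{i+1}\in D_{i+1}$. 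If $i+1$ were also light, the same statement applied to $H_{i+1}$ would make $y_{i+1}$ undominated by $D_{i+1}$, in particular $y_{i+1}\notin D_{i+1}$ — a contradiction. Hence the light indices form an independent set of $C_n$, so at most $\lfloor n/2\rfloor$ of them occur, and $\gamma(G\otimes_f H)=\sum_i|D_i|\ge kn+(n-\lfloor n/2\rfloor)=kn+\lceil n/2\rceil$.

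For the upper bound, label $V(H)=\{1,\ldots,3k+2\}$ cyclically, set $f(g_i)=2$ for $i\equiv 3\pmod 4$ and $f(g_i)=1$ otherwise, and declare the even-indexed copies to be \emph{light}, except that when $n\equiv 2\pmod 4$ the copy $H_n$ is reclassified as heavy. A light $H_i$ has $\{y_i,x_i\}=\{(g_i,1),(g_i,2)\}$, two adjacent vertices of $H_i$, so $H_i-\{y_i,x_i\}\cong P_{3k}$; take $D_i$ to be its unique $\gamma$-set — $k$ vertices, dominating $V(H_i)\setminus\{y_i,x_i\}$ but neither $y_i$ nor $x_i$. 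A heavy $H_i$ gets a $(k+1)$-element $\gamma$-set $D_i$ of $H_i\cong C_{3k+2}$ containing $(g_i,1)$, which exists by vertex-transitivity. Then $D=\bigcup_i D_i$ is dominating: heavy copies are dominated internally, and each light $H_i$ has both neighbours $H_{i\pm1}$ odd-indexed, hence heavy, with $y_i$ dominated by its external neighbour $x_{i-1}=(g_{i-1},1)\in D_{i-1}$ and $x_i$ dominated by its external neighbour $y_{i+1}=(g_{i+1},1)\in D_{i+1}$. As $[n]$ has $\lfloor n/2\rfloor$ even elements, with one fewer light when $n\equiv 2\pmod 4$, we get $|D| = kn + n - \#\{\text{light copies}\}$, which is $kn+n/2$ for $4\mid n$ and $kn+\lfloor n/2\rfloor+1$ otherwise, as required.

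The steps I expect to need the most care are: in the lower bound, justifying that $|D_i|=k$ forces \emph{both} connecting vertices of $H_i$ to be dominated from its two neighbours (the engine behind ``no two consecutive light copies''); and in the upper bound, the boundary analysis of the period-$4$ assignment for each residue of $n$ modulo $4$ — in particular that when $n\equiv 2\pmod 4$ the copy $H_n$ genuinely cannot be light, since its two connecting vertices collapse to $(g_n,1)$, while its neighbours $H_{n-1}$ and $H_1$ are heavy, so the downgrade costs exactly one extra vertex. At a higher level, the $\lfloor n/2\rfloor$ term, absent in $\gs(C_n,C_{3k+1})\in\{kn,kn+1\}$, is forced by $\gamma(C_{3k+2}-v)=\gamma(C_{3k+2})$: a copy can be served by only $k$ vertices exactly when both its connecting vertices are dominated from outside, which by the lower-bound argument can happen in at most every second copy of $C_n\otimes_f C_{3k+2}$.
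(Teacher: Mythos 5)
Your proposal is correct and follows essentially the same route as the paper: a lower bound obtained by showing that any copy served by only $k$ vertices must have both connecting vertices dominated from the adjacent copies, so the ``light'' indices form an independent set in $C_n$, together with an upper bound from an explicit period-$4$ function that alternates light copies (connecting vertices deleted, leaving $P_{3k}$) with heavy ones (a $\gamma$-set of $C_{3k+2}$ through the connecting vertex). Your counting argument ($k$ vertices dominate at most $3k$ of the $3k+2$ vertices) replaces the paper's case analysis on $d(x_i,y_i)\bmod 3$, and your choice of $f$ differs from the paper's $1,2,3,3$ pattern, but these are cosmetic; your handling of the wrap-around for $n\equiv 2\pmod 4$ is in fact cleaner than the paper's.
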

\begin{proof}
For $n \ge 3$ and $k \ge 1$, let $G \cong C_n$ and let $H \cong C_{3k+2}$. We adopt our notation employed in our earlier proofs. Thus, the cycle $G$ is given by $g_1 g_2 \ldots g_n g_1$, and  $V(H) = \{1,2,\ldots, 3k+2\}$ where vertices $i$ and $i+1$ are consecutive on the cycle $H$ for all $i \in [3k+2]$ (and where addition is taken modulo~$3k+2$, and so vertex $1$ and vertex $3k+2$ are adjacent). As before, we denote the copy $g_iH$ of $H$ corresponding to the vertex $g_i$ simply by $H_i$ for each $i \in [n]$. Thus, $H_i = C_{3k+2}$ is the cycle $(g_i,1), (g_i,2), \ldots, (g_i,3k+2), (g_i,1)$ for all $i \in [n]$.

We adopt our notation from the proof of Theorem~\ref{t:thm1}. Thus, we denote the connecting edge from $H_i$ to $H_{i+1}$ by $x_iy_{i+1}$ for all $i \in [n]$, where $x_i \in V(H_i)$, $y_{i+1} \in V(H_{i+1})$, and addition is taken modulo~$n$. Thus, $y_i = (g_i,f(g_{i-1}))$ and $x_i = (g_i,f(g_{i+1}))$ for all $i \in [n]$.

We proceed further with two claims. The first claim establishes a lower bound on $\gs(G,H)$, and the second claim an upper bound on $\gs(G,H)$. Combining these two bounds yields the desired result in the statement of the theorem. \1

\begin{claim}
\label{claim-3k2dom-1}
$\gs(C_n,H) \ge kn + \left\lfloor \dfrac{n}{2} \right\rfloor$.
\end{claim}
\proof
Let $f \colon V(G) \rightarrow V(H)$ be an arbitrary function. We show that
\begin{equation}
\label{Eq3a}
\gamma(G \otimes _f H) \ge kn + \left\lfloor \dfrac{n}{2} \right\rfloor.
\end{equation}

Let $D$ be a $\gamma$-set of $G \otimes _f H$ constructed, and let $D_i = D \cap V(H_i)$ for $i \in [n]$. If the vertex $x_i$ is not dominated by $D_i$, then either $x_i \ne y_i$, in which case $x_i$ is dominated by the vertex $y_{i+1} \in D$, or $x_i = y_i$, in which case $x_i$ is dominated by the vertex $x_{i-1} \in D$ or the vertex $y_{i+1} \in D$. Analogously, if the vertex $y_i$ is not dominated by $D_i$, then either $x_i \ne y_i$, in which case $y_i$ is dominated by the vertex $x_{i-1} \in D$, or $x_i = y_i$, in which case $y_i$ is dominated by the vertex $x_{i-1} \in D$ or the vertex $y_{i+1} \in D$. If a vertex is not dominated by $D_i$, then such a vertex is $x_i$ or $y_i$, and we say that such a vertex is dominated from outside~$H_i$.

Similarly as before, we proceed with a claim that delivers properties of sets $D_i$ leading to the desired lower bound on the Sierpi\'{n}ski domination number.

\begin{subclaim}
\label{claim-3k2dom-1.1}
The following properties hold in the graph $H_i$. \\ [-22pt]
\begin{enumerate}
\item If $d(x_i,y_i) \equiv 1 \, (\modo \, 3)$, then $|D_i| = k$. Further, both $x_i$ and $y_i$ are dominated from outside $H_i$.
\item If $d(x_i,y_i) \not\equiv 1 \, (\modo \, 3)$, then $|D_i| = k+1$.
\end{enumerate}
\end{subclaim}
\proof
Suppose that $D_i$ contains a vertex $w_i$ that dominates $x_i$. Possibly, $w_i = x_i$. In order to dominate the $3(k-1) + 2$ vertices in $H_i$ not dominated by $w_i$, at least $k$ additional vertices are needed even if the vertex $y_{i}$ is dominated outside the cycle $H_i$. Thus in this case, $|D_i| \ge k+1$, implying by the minimality of the set $D$ that $|D_i| = k+1$. Analogously, if $D_i$ contains a vertex that dominates $y_i$, then $|D_i| = k+1$. Hence, if $x_i$ or $y_i$ (or both $x_i$ and $y_i$) are dominated by $D_i$, then $|D_i| = k+1$.

Suppose that neither $x_i$ nor $y_i$ is dominated by $D_i$, implying that both $x_i$ and $y_i$ are dominated from outside the cycle $H_i$. Thus, $D_i$ is a dominating set of $H_i' = H_i - x_i - y_i$. If $x_i = y_i$, then $H_i' = P_{3k+1}$, and by the minimality of $D$ we have $|D_i| = \gamma(P_{3k+1}) = k+1$. Hence, we may assume that $x_i \ne y_i$. If $x_i$ and $y_i$ are adjacent, then $H_i' = P_{3k}$, and by the minimality of $D$ we have $|D_i| = \gamma(P_{3k}) = k$. Suppose that $x_i$ and $y_i$ are not adjacent, and so $H'$ is the disjoint union of two paths $P_{k_1}$ and $P_{k_2}$, where $k_1 + k_2 = 3k$.  If $k_1 = 3j_1 + 1$ and $k_2 = 3j_2 + 2$ (or if $k_1 = 3j_1 + 2$ and $k_2 = 3j_2 + 1$) for some integers $j_i$ and $j_2$ where $j_1 + j_2 = k-1$, then $|D_i| = \lceil k_1/3 \rceil + \lceil k_2/3 \rceil = (j_1 + 1) + (j_2 + 1) = k+1$. If $k_1 = 3j_1$ and $k_2 = 3j_2$ where $j_1 + j_2 = k$, then $|D_i| = \lceil k_1/3 \rceil + \lceil k_2/3 \rceil = j_1 + j_2 = k$. Hence if neither $x_i$ nor $y_i$ is dominated by $D_i$, then either $d(x_i,y_i) \equiv 1 \, (\modo \, 3)$, in which case $|D_i| = k$, or $d(x_i,y_i) \not\equiv 1 \, (\modo \, 3)$, in which case $|D_i| = k+1$. This proves properties~(a) and~(b) of the claim.~\smallqed

\medskip
By Claim~\ref{claim-3k2dom-1.1}, if $|D_i| = k$ for some $i \in [n]$, then $|D_{i-1}| = |D_{i+1}| = k+1$ where addition is taken modulo~$n$. Furthermore in this case when $|D_i| = k$, the vertices $x_i$ and $y_i$ are distinct and are both dominated from outside $H_i$, implying that $y_{i+1} \in D_{i+1}$ and $x_{i-1} \in D_{i-1}$. This implies that if $n$ is even, then $|D| \ge kn + n/2$, and if $n$ is odd, then $|D| \ge kn + (n+1)/2$. This proves  inequality~(\ref{Eq3a}). 

\iffalse
\begin{subclaim}
\label{claim-3k2dom-1.2}
For $n \equiv 2 \, (\modo \, 4)$, there exists a function $f:V(G)\rightarrow V(G)$ such that $\gamma(G \otimes_f H) = kn + \frac{n}{2}$.
\end{subclaim}
\proof Let $n \equiv 2 \, (\modo \, 4)$. Thus, $n = 4 \ell + 2$. Consider $G\cong C_{18}$ with $V(G)=[18]$ and $H\cong C_8$ with $V(H)=[8]$ and function $f: V(G)\rightarrow V(H)$ defined as follows: $f(1)=f(2)=$.~\smallqed  %Suppose, to the contrary, that $|D| = kn + n/2$. By our earlier observations, we may assume that $|D_i| = k+1$ for $i \in [n]$ with $i$ odd and $|D_i| = k$ for $i \in [n]$ with $i$ even. Let $y_i = (g_i,i_{j_1})$ and $x_i = (g_i,i_{j_2})$ for all $i \in [n]$, implying that $f(g_{i-1}) = i_{j_1}$ and $f(g_{i+1}) = i_{j_2}$. If $i$ is even, then $|D_i| = k$ and by Claim~\ref{claim-3k2dom-1.1} we have $|i_{j_1} - i_{j_2}| \equiv 1 \, (\modo \, 3)$. Thus for all even $i$ we have $|f(g_{i+1}) - f(g_{i-1})| \equiv 1 \, (\modo \, 3)$ which is in turn equivalent that for all odd $i \in [n]$, we have $|f(g_{i+2}) - f(g_{i})| \equiv 1 \, (\modo \, 3)$, where addition is taken modulo~$n$. For all integers $i \in [2\ell]_0$, we hence have
%\begin{equation}
%\label{Eq:relate1}
%|f(g_{2i+1}) - f(g_{2i-1})| \equiv 1 \, (\modo \, 3), \1
%\end{equation}
% where addition is taken modulo~$n$. Thus for all integers $i \in [2\ell]_0$, there exists an integer $p_i$ such that
% \begin{equation}
% \label{Eq:relate2}
% |f(g_{2i+1}) - f(g_{2i-1})| = 3p_i + 1. \1
% \end{equation}
% This implies that
% \[
% |f(g_1) - f(g_{4\ell + 1})| = \sum_{i=1}^{2\ell} (3p_{2i} + 1) + 3p_{B} = 3p + 2\ell + 3p_{B},
% \]
% where $p_{B}$ is some integer and the integer $p$ satisfies $p = \sum_{i=1}^{2\ell} p_i$. Hence, if $\ell \bmod 3 \in \{0,1\}$, then
% \[
% |f(g_1) - f(g_{4\ell + 1})| \bmod 3 \in \{0,2\},
% \]
% contradicting \eqref{Eq:relate1} which implies that $|f(g_1) - f(g_{4\ell + 1})| \equiv 1 \, (\modo \, 3)$. 

% Suppose finally that  $\ell \equiv 2 \, (\modo \, 3)$. In this case we have 
% \[
% |f(g_1) - f(g_{4\ell - 1})| = \sum_{i=1}^{2\ell-1} (3p_{2i} + 1) + 3p_{B} = 3p + (2\ell - 1) + 3p_{B},
% \]
% where $p_{B}$ is some integer and the integer $p$ satisfies $p = \sum_{i=1}^{2\ell-1} p_{2i}.$

% Hence $|f(g_1) - f(g_{4\ell - 1}) | \equiv (2\ell -1) \, (\modo \, 3)$. Moreover, as $\ell \equiv 2 \, (\modo \, 3)$, this implies that $|f(g_1) - f(g_{4\ell - 1})| \equiv 0 \, (\modo \, 3)$. On the other hand, 
% \[
% |f(g_1) - f(g_{4\ell - 1})| = |(f(g_1) - f(g_{4\ell + 1})) + (f(g_{4\ell + 1}) - f(g_{4\ell - 1}))|
% \]
% which implies that 
% \[
% |f(g_1) - f(g_{4\ell - 1})| = (3p_{2\ell + 1} + 1) + (3p_{2\ell} + 1) + 3p'\,,
% \]
% hence $|f(g_1) - f(g_{4\ell - 1}) |\equiv 2 \, (\modo \, 3)$. This contradiction proves that if $n = 4 \ell + 2$, then $|D| = kn + n/2 + 1$.~\smallqed

\medskip
By our earlier observations, if $n \equiv 0 \, (\modo \, 4)$, then $|D| \ge kn + n/2$, and if $n$ is odd, then $|D| \ge kn + (n+1)/2$. By Claim~\ref{claim-3k2dom-1.2}, if $n \equiv 2 \, (\modo \, 4)$, then $|D| \ge kn + \frac{n}{2}$. This completes the proof of Claim~\ref{claim-3k2dom-1}.~\smallqed \1
\fi

\begin{claim}
\label{claim-3k2dom-2}
$\gs(C_n,H) \le kn + \left\lfloor \dfrac{n}{2} \right\rfloor +  \left\lceil \dfrac{n}{4} \right\rceil - \left\lfloor \dfrac{n}{4} \right\rfloor$.
\end{claim}
\proof
Let $f \colon V(G) \rightarrow V(H)$ be the function defined by
\[
f(g_i) =
\left\{
\begin{array}{ll}
1; & \mbox{$i \equiv 1 \, (\modo \, 4)$}, \1 \\
2; & \mbox{$i \equiv 2 \, (\modo \, 4)$}, \1 \\
3; & \mbox{otherwise}.
\end{array}
\right.
\]
for all $i \in [n]$ where addition is taken modulo~$n$. Adopting our earlier notation, recall that $y_i = (g_i,f(g_{i-1}))$ and $x_i = (g_i,f(g_{i+1}))$ for all $i \in [n]$. Let $n = 4\ell + j$ where $j \in [3]_0 = \{0,1,2,3\}$. We note that $f(g_{4i-3}) = 1$, $f(g_{4i-2}) = 2$, and $f(g_{4i-1}) = f(g_{4i}) = 3$ for all $i \in [\ell]$.

\medskip
\emph{Case~1. $n \equiv 0 \, (\modo \, 4)$.}

\noindent Thus, $n = 4\ell$. We note that $y_{4i-3} = (g_{4i-3},3)$ and $x_{4i-3} = (g_{4i-3},2)$ for all $i \in [n]$, and so in the graph $H_{4i-3}$ the vertices $x_{4i-3}$ and $y_{4i-3}$ are at distance~$1$. Moreover, $y_{4i-1} = (g_{4i-1},2)$ and $x_{4i-1} = (g_{4i-1},3)$ for all $i \in [n]$, and so in the graph $H_{4i-1}$ the vertices $x_{4i-1}$ and $y_{4i-1}$ are at distance~$1$. This implies that $H_{4i-j} - \{x_{4i-j},y_{4i-j}\} \cong C_{3k}$ for $j \in \{1,3\}$. Let $D_{4i-j}$ be a $\gamma$-set of $H_{4i-j} - \{x_{4i-j},y_{4i-j}\}$ for $j \in \{1,3\}$, and so $|D_{4i-j}| = k$.

We also note that $y_{4i-2} = (g_{4i-2},1)$ and $x_{4i-2} = (g_{4i-2},3)$ for all $i \in [n]$, and so in the graph $H_{4i-2}$ the vertices $x_{4i-2}$ and $y_{4i-2}$ are at distance~$2$. Moreover, $y_{4i} = (g_{4i},3)$ and $x_{4i} = (g_{4i},1)$ for all $i \in [n]$, and so in the graph $H_{4i}$ the vertices $x_{4i}$ and $y_{4i}$ are at distance~$2$. This implies that $H_{4i-j} - N[\{x_{4i-j},y_{4i-j}\}] \cong C_{3(k-1)}$ for $j \in \{0,2\}$. Let $D_{4i-j}$ be a $\gamma$-set of $H_{4i-j}$ that contains both vertices $x_{4i-j}$ and $y_{4i-j}$ for $j \in \{0,2\}$, and so $|D_{4i-j}| = k+1$. The set
\[
D = \bigcup_{i=1}^{4\ell} D_i
\]
is a dominating set of $G \otimes _f H$, and so $\gamma(G \otimes _f H) \le |D| = 4k \ell + 2\ell = kn + n/2$.

\medskip
\emph{Case~2. $n \equiv 2 \, (\modo \, 4)$.}

\noindent Thus, $n = 4\ell + 2$ and in this case, $f(g_{4\ell + 1}) = 1$ and $f(g_{4\ell + 2}) = 2$. We note that in the graph $H_{4\ell + 1}$, the vertices $x_{4\ell + 1}$ and $y_{4\ell + 1}$ are at distance~$1$ and in the graph $H_{4\ell + 2}$ we have $x_{4\ell + 2}=y_{4\ell + 2}$. For $i \in [4\ell]$, we define the set $D_i$ exactly as in the previous case. Further, let $D_{4\ell + 1}$ be a $\gamma$-set of  $H_{4\ell + 1} - \{x_{4\ell + 1},y_{4\ell + 1}\} \cong C_{3k}$, and let $D_{4\ell + 2}$ be a $\gamma$-set of  $H_{4\ell + 2}$ containing $x_{4\ell + 2}$. We note that $|D_{4\ell + 1}| = k$ and $|D_{4\ell + 2}| = k+1$. The set
\[
D = \bigcup_{i=1}^{4\ell+2} D_i
\]
is a dominating set of $G \otimes _f H$, and so $\gamma(G \otimes _f H) \le |D| = 4k \ell +2k + 2\ell + 1 = kn + n/2$.

\medskip
\emph{Case~3. $n \equiv 1 \, (\modo \, 4)$.}

\noindent Thus, $n = 4\ell + 1$, and in this case, $f(g_{4\ell + 1}) = 1$. Thus, $y_{4\ell + 1} = (g_{4\ell + 1},3)$ and $x_{4\ell + 1} = (g_{4\ell+1},1)$, and so in the graph $H_{4\ell + 1}$, the vertices $x_{4\ell + 1}$ and $y_{4\ell + 1}$ are at distance~$2$. For $i \in [4\ell]$, we define the set $D_i$ exactly as in the previous cases. Further, let $D_{4\ell + 1}$ be a $\gamma$-set of $H_{4\ell + 1}$ that contains the vertex $x_{4\ell + 1}$. We note that $|D_{4\ell + 1}| = k+1$. The set
\[
D = \bigcup_{i=1}^{4\ell+1} D_i
\]
is a dominating set of $G \otimes _f H$, and so $\gamma(G \otimes _f H) \le |D| = 4k\ell + k + 2\ell + 1 = kn + (n+1)/2$.

\medskip
\emph{Case~4. $n \equiv 3 \, (\modo \, 4)$.}

\noindent Thus, $n = 4\ell + 3$, and in this case, $f(g_{4\ell + 1}) = 1$, $f(g_{4\ell + 2}) = 2$, and $f(g_{4\ell + 3}) = 3$. In particular, $y_{4\ell + 3} = (g_{4\ell + 3},2)$ and $x_{4\ell + 3} = (g_{4\ell + 3},1)$, and so in the graph $H_{4\ell + 3}$, the vertices $x_{4\ell + 3}$ and $y_{4\ell + 3}$ are at distance~$1$. For $i \in [4\ell + 2]$, we define the set $D_i$ exactly as in Case~2. Further, let $D_{4\ell + 3}$ be a $\gamma$-set of $H_{4\ell + 3}$ containing the vertex $x_{4\ell + 3}$. We note that $|D_{4\ell + 3}| = k+1$. The set
\[
D = \bigcup_{i=1}^{4\ell+3} D_i
\]
is a dominating set of $G \otimes _f H$, and so $\gamma(G \otimes _f H) \le |D| = 4k\ell + 3k + 2\ell + 2 = kn + (n+1)/2$. The desired result of the claim now follows from the four cases above.~\smallqed

\medskip
The proof of Theorem~\ref{t:thm-3k2dom} follows as an immediate consequence of Claim~\ref{claim-3k2dom-1} and Claim~\ref{claim-3k2dom-2}.
\end{proof}

\subsection{The cycle $C_n$ and cycles $C_{3k}$}

In this section, we determine the Sierpi\'{n}ski domination number $\gamma_{S}(C_n,C_{3k})$ and the upper Sierpi\'{n}ski domination number $\Gamma_{S}(C_n,C_{3k})$.

\begin{theorem}
\label{t:thm-3kdom}
For $n \ge 3$ and $k \ge 1$,
\[
\gs(C_n,C_{3k}) = \Gs(C_n,C_{3k}) = kn.
\]
\end{theorem}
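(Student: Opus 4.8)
The plan is to observe that the upper bound is essentially free and to concentrate on boosting the elementary lower bound. By Theorem~\ref{thm:elementary} applied with $G = C_n$ and $H = C_{3k}$ we have $\Gs(C_n,C_{3k}) \le n(C_n)\gamma(C_{3k}) = kn$, and since $\gs(C_n,C_{3k}) \le \Gs(C_n,C_{3k})$ always holds, it suffices to prove $\gs(C_n,C_{3k}) \ge kn$. I emphasize that the left inequality of Theorem~\ref{thm:elementary} only yields $\gs(C_n,C_{3k}) \ge kn - n$, so the crux is to show that no connecting edge can actually reduce the contribution of a single $H$-layer below $\gamma(C_{3k}) = k$.

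To this end, fix an arbitrary function $f \colon V(C_n) \to V(C_{3k})$, write $G = C_n$ as the cycle $g_1 g_2 \ldots g_n g_1$, and let $H_i = g_i H$ for $i \in [n]$, so that each $H_i \cong C_{3k}$. Let $D$ be a $\gamma$-set of $G \otimes _f H$ and set $D_i = D \cap V(H_i)$, so that $\gamma(G \otimes _f H) = |D| = \sum_{i=1}^n |D_i|$. The key structural observation is that the only edges of $G \otimes _f H$ leaving $H_i$ are the two connecting edges incident with the connecting vertices $x_i = (g_i, f(g_{i+1}))$ and $y_i = (g_i, f(g_{i-1}))$ of $H_i$ (with $x_i = y_i$ allowed), since $g_i$ has exactly two neighbors in $C_n$. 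Consequently every vertex of $V(H_i) \setminus \{x_i, y_i\}$ is dominated by $D_i$ inside $H_i$; that is, $D_i$ dominates at least $3k - 2$ prescribed vertices of the cycle $H_i$.

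The bound $|D_i| \ge k$ then follows from a one-line counting argument: since $H_i \cong C_{3k}$ has maximum degree $2$, each vertex of $D_i$ dominates at most three vertices of $H_i$, so $3|D_i| \ge 3k - 2$ (and $3|D_i| \ge 3k-1$ in the case $x_i = y_i$), whence $|D_i| \ge \lceil (3k-2)/3 \rceil = k$. Summing over $i \in [n]$ gives $\gamma(G \otimes _f H) \ge kn$, and since $f$ was arbitrary, $\gs(C_n,C_{3k}) \ge kn$. Combined with the first paragraph, this yields $\gs(C_n,C_{3k}) = \Gs(C_n,C_{3k}) = kn$.

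I do not anticipate a genuine obstacle here. Unlike the cases $p = 1$ and $p = 2$, deleting up to two vertices from $C_{3k}$ never lowers its domination number below $k$, so the elementary reasoning upgrades cleanly; the only point that requires care is the structural claim that a copy $H_i$ of $H$ meets the rest of the Sierpi\'{n}ski product only at its two connecting vertices $x_i$ and $y_i$, which in turn is immediate from $\Delta(C_n) = 2$.
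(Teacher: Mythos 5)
Your proof is correct. It follows the same overall decomposition as the paper's proof: fix an arbitrary $f$ and an arbitrary dominating set $D$ of $C_n \otimes_f C_{3k}$, restrict to $D_i = D \cap V(H_i)$, and show $|D_i| \ge k$ using the fact that only the two connecting vertices $x_i, y_i$ of $H_i$ can be dominated from outside. Where you diverge is in how you justify the per-layer bound: the paper runs a case analysis on which of $x_i, y_i$ are dominated by $D_i$, reducing each case to the domination number of $P_{3k-1}$, $P_{3k-2}$, or a disjoint union of two paths on $3k-2$ vertices, and in fact concludes $|D_i| = k$ exactly. You instead use a single counting inequality --- each vertex of $D_i$ dominates at most three vertices of $H_i$, and $D_i$ must dominate the $3k-2$ (or $3k-1$) vertices of $V(H_i)\setminus\{x_i,y_i\}$, so $3|D_i| \ge 3k-2$ and $|D_i| \ge \lceil(3k-2)/3\rceil = k$. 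This is cleaner and avoids the path-domination bookkeeping; it buys brevity at the cost of not exhibiting the exact value $|D_i| = k$, which you do not need since the matching upper bound $\Gs(C_n,C_{3k}) \le n\,\gamma(C_{3k}) = kn$ comes for free from Theorem~\ref{thm:elementary}. Both routes are sound; yours is arguably the more economical for this particular case $p=0$, precisely because $\lceil(3k-2)/3\rceil$ does not drop below $k$, as you observe.
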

\begin{proof}
We adopt our notation from the earlier sections. Let $G\cong C_n$ be the cycle $g_1 g_2 \ldots g_n g_1$, and let $H_i$ be the $i$th copy of $C_{3k}$ corresponding to the vertices $g_i$ of $G$ for $i \in [n]$. As before, we denote the connecting edge from $H_i$ to $H_{i+1}$ by $x_iy_{i+1}$ for all $i \in [n]$.

Let $f \colon V(G) \rightarrow V(H)$ be an arbitrary function. Let $D$ be a $\gamma$-set of $G \otimes _f H$, and let $D_i = D \cap V(H_i)$ for $i \in [n]$. We show that $|D_i| = k$ for all $i \in [n]$. If both vertices $x_i$ and $y_i$ are dominated by $D_i$, then $D_i$ is a $\gamma$-set of $H_i \cong C_{3k}$, and so $|D_i| = k$. If exactly one of $x_i$ and $y_i$ is dominated by $D_i$, say $x_i$, then by the minimality of the set $D$, the set $D_i$ is a $\gamma$-set of $H_i - y_i \cong P_{3k-1}$, and so $|D_i| = k$. Hence, we may assume that neither $x_i$ nor $y_i$ is dominated by $D_i$, for otherwise, $|D_i| = k$ and the desired bound follows.

With our assumption that neither $x_i$ nor $y_i$ is dominated by $D_i$, the set $D_i$ is a $\gamma$-set of $H_i' = H_i - \{x_i,y_i\}$. If $x_i = y_i$, then $H_i' = P_{3k-1}$, and by the minimality of $D$ we have $|D_i| = \gamma(P_{3k-1}) = k$. Hence, we may assume that $x_i \ne y_i$. If $x_i$ and $y_i$ are adjacent, then $H_i' = P_{3k-2}$, and by the minimality of $D$ we have $|D_i| = \gamma(P_{3k-2}) = k$. Suppose that $x_i$ and $y_i$ are not adjacent, and so $H'$ is the disjoint union of two paths $P_{k_1}$ and $P_{k_2}$, where $k_1 + k_2 = 3k-2$.  If $k_1 = 3j_1 + 1$ and $k_2 = 3j_2$ for some integers $j_i$ and $j_2$ where $j_1 + j_2 = k-1$, then $|D_i| = \lceil k_1/3 \rceil + \lceil k_2/3 \rceil = (j_1 + 1) + j_2 = k$. Analogously, if $k_1 = 3j_1$ and $k_2 = 3j_2 + 1$, then $|D_i| = k$. If $k_1 = 3j_1 + 2$ and $k_2 = 3j_2 + 2$ for some integers $j_i$ and $j_2$ where $j_1 + j_2 = k-2$, then $|D_i| = \lceil k_1/3 \rceil + \lceil k_2/3 \rceil = (j_1 + 1) + (j_2 + 1) = k$. In all cases, $|D_i| = k$, implying that
\[
\gamma(G \otimes _f H) = |D| = \sum_{i=1}^n |D_i| = kn.
\]

Since $f \colon V(G) \rightarrow V(H)$ was chosen as an arbitrary function, and $D$ as an arbitrary $\gamma$-set of $G \otimes _f H$, we deduce that $\gs(C_n,C_{3k}) = \Gs(C_n,C_{3k}) = \gamma(G \otimes _f H) = kn$.
\end{proof}

\section*{Concluding remarks}

It seems to us that in the vast majority of cases where the lower Sierpi\'{n}ski domination number of the Sierpi\'{n}ski product of two cycles is specified to two values exactly, the larger of the two is the correct value. However, the following example, which surprised us, demonstrates that there are also cases where the exact value is the smaller of the two possible values. 

Let $G \cong C_{18}$ with $V(G)=[18]$ and let $H \cong C_7$ with $V(H)=[7]$ and let the function $f: V(G)\rightarrow V(H)$ be defined as follows:
\begin{align*}
f(1) & = f(4) =f(5) =f(18) = 4, \\
f(2) & = f(3) =f(6) =f(7) = 2, \\
f(8) & = f(9) = 7, \\ 
f(10) & = f(11) = 5, \\
f(12) & = f(13) = 3, \\
f(14) & = f(15) = 1, \\
f(16) & = f(17) = 6.
\end{align*}

Then Theorem~\ref{t:thm-main-1} asserts that $\gamma (G \otimes_f H) \in \{36, 37\}$ and it is straightforward to check that the exact value is $\gamma (G \otimes_f H) = 36$. 

\section*{Acknowledgements}

We would like to thank the reviewer very much for careful reading of the paper and in particular for pointing to a subtle technical detail that led to the reformulation of one of the main theorems.

This research was obtained during the sabbatical visit of the first author at the University of Ljubljana and he thanks them for their kindness in hosting him. The first author also acknowledges that his research visit was supported in part by the University of Johannesburg and the South African National Research Foundation. S.K.\ acknowledges the financial support from the Slovenian Research Agency (research core funding No.\ P1-0297 and projects J1-2452, N1-0285).

\end{document}